\documentclass[11pt]{amsart}
\usepackage{graphicx}

\usepackage{amssymb}
\usepackage[all]{xy}

%

\theoremstyle{plain}
\newtheorem{theorem}{Theorem}[section]
\newtheorem{proposition}[theorem]{Proposition}
\newtheorem{lemma}[theorem]{Lemma}
\newtheorem{corollary}[theorem]{Corollary}

\theoremstyle{definition}
\newtheorem{definition}[theorem]{Definition}
\newtheorem{notation}[theorem]{Notation}

\theoremstyle{remark}
\newtheorem{example}[theorem]{Example}
\newtheorem{remark}[theorem]{Remark}
\newtheorem{warning}[theorem]{Warning}

\newcommand{\Z}{\mathbb{Z}}

\newcommand{\AAA}{\mathbf{\mathcal{A}}}
\newcommand{\CC}{\mathbf{\mathcal{C}}}
\newcommand{\DD}{\mathbf{\mathcal{D}}}

\newcommand{\al}{\alpha}
\newcommand{\be}{\beta}
\newcommand{\de}{\delta}
\newcommand{\De}{\Delta}
\newcommand{\ep}{\epsilon}
\newcommand{\Ga}{\Gamma}
\newcommand{\ga}{\gamma}
\newcommand{\io}{\iota}
\newcommand{\ka}{\kappa}
\newcommand{\La}{\Lambda}
\newcommand{\la}{\lambda}
\newcommand{\Om}{\Omega}
\newcommand{\phy}{\varphi}
\newcommand{\Si}{\Sigma}
\newcommand{\te}{\theta}

\newcommand{\inj}{\hookrightarrow}
\newcommand{\ral}{\xrightarrow} 
\newcommand{\Ra}{\Rightarrow}
\newcommand{\surj}{\twoheadrightarrow}
\newcommand{\tild}{\widetilde}

\newcommand{\dfn}{:=}
 
\newcommand{\lan}{\left\langle}
\newcommand{\op}{\oplus}
\newcommand{\Op}{\bigoplus}
\newcommand{\ot}{\otimes}
\newcommand{\ol}{\overline}
\newcommand{\ran}{\right\rangle}

\newcommand{\ul}{\underline}
\newcommand{\We}{\bigvee}
\newcommand{\x}{\times}

\newcommand{\Ab}{\mathbf{Ab}}
\newcommand{\Ho}{\mathbf{Ho}}
\newcommand{\Mod}{\mathbf{Mod}}
\newcommand{\Modff}{\mathbf{Mod}^{\mathrm{ff}}}
\newcommand{\PI}{\pmb{\Pi}}
\newcommand{\PIst}{\PI^{\mathrm{st}}}
\newcommand{\PiAlg}{\PI\mathbf{Alg}}
\newcommand{\Set}{\mathbf{Set}}
\newcommand{\Sp}{\mathbf{Sp}}
\newcommand{\Topp}{\mathbf{Top}}
\newcommand{\TT}{\mathbf{T}}

\newcommand{\abs}[1]{\lvert #1 \rvert}

\DeclareMathOperator{\coker}{coker}
\DeclareMathOperator{\Ext}{Ext}
\DeclareMathOperator{\Fun}{Fun}
\DeclareMathOperator{\Hom}{Hom}
\DeclareMathOperator{\im}{im}
\DeclareMathOperator{\Lan}{Lan}
\DeclareMathOperator{\Tor}{Tor}

\newcommand{\col}{\mathrm{col}}
\newcommand{\hi}{\mathrm{hi}}
\newcommand{\id}{\mathrm{id}}
\newcommand{\inc}{\mathrm{inc}}
\newcommand{\lo}{\mathrm{lo}}
\newcommand{\model}{\mathrm{model}}
\newcommand{\opp}{\mathrm{op}}

\newcommand{\Def}{\textbf}

\begin{document}

\title[The realizability of operations on homotopy groups]{The realizability of operations on homotopy groups concentrated in two degrees}

\author{Hans-Joachim Baues}
\address{Max-Planck-Institut f\"ur Mathematik, Vivatsgasse 7, 53111 Bonn, Germany}
\email{baues@mpim-bonn.mpg.de}

\author{Martin Frankland}
\address{Department of Mathematics, University of Western Ontario, Middlesex College, London, ON  N6A 5B7, Canada}
\email{mfrankla@uwo.ca}

\begin{abstract}
The homotopy groups of a space are endowed with homotopy operations which define the $\Pi$-algebra of the space. An Eilenberg-MacLane space is the realization of a $\Pi$-algebra concentrated in one degree. In this paper, we provide necessary and sufficient conditions for the realizability of a $\Pi$-algebra concentrated in two degrees. We then specialize to the stable case, and list infinite families of such $\Pi$-algebras that are not realizable.
\end{abstract}

\subjclass[2010]{Primary 55Q35; Secondary 55Q40, 55Q45, 55Q15, 55P20}

\keywords{realization, homotopy operation, homotopy group, 2-stage, $\Pi$-algebra, Whitehead product}

\thanks{The second author was supported in part by a Postdoctoral Research Fellowship from the Fonds Qu\'eb\'ecois de la Recherche sur la Nature et les Technologies (FQRNT). He would like to thank the Max-Planck-Institut f\"ur Mathematik Bonn for its generous hospitality, as well as Katja Hutschenreuter, Markus Szymik, Haynes Miller, Charles Rezk, Paul Goerss, Angelica Osorno, Doug Ravenel, and Mark Behrens for fruitful conversations. The authors also thank the referee for useful comments.}

\date{\today}

\maketitle

\section{Realization problem for homotopy operations}

The homotopy groups $\pi_* X$ of a pointed space $X$ are not merely a list of groups, but carry the additional structure of an action of the (primary) homotopy operations, which are natural transformations
\[
\pi_{n_1} X \x \pi_{n_2} X \x \ldots \x \pi_{n_j} X \to \pi_n X.
\]
These include for example Whitehead products $\pi_p X \x \pi_q X \to \pi_{p+q-1} X$, as well as precomposition operations $\al^* \colon \pi_m X \to \pi_n X$ induced by any map $\al \colon S^n \to S^m$, defined by $\al^*(x) = x \circ \al$. By the Yoneda lemma, $j$-ary homotopy operations are parametrized by homotopy classes of pointed maps
\[
S^n \to S^{n_1} \vee S^{n_2} \vee \ldots \vee S^{n_j}.
\]
This information is encoded in a category as follows.

\begin{definition}
Let $\Topp_*$ denote the category of pointed topological spaces. Let $\PI$ denote the full subcategory of the homotopy category $\Ho \Topp_*$ consisting of finite wedges of spheres $\vee S^{n_i}$, $n_i \geq 1$. Note that the empty wedge (a point) is allowed.

A \Def{$\Pi$-algebra} is a product-preserving functor $\PI^{\opp} \to \Set$, in other words, a contravariant functor $\PI \to \Set$ which sends wedges to products. Let $\PiAlg$ denote the category of $\Pi$-algebras, where morphisms are natural transformations.
\end{definition}

The prototypical example is the homotopy $\Pi$-algebra $[-,X]$ of a pointed space $X$, which is the functor represented by $X$ in the homotopy category. One can view this data as the graded group $\pi_* X$, with $\pi_n X = [S^n,X]$, endowed with the structure of primary homotopy operations. Likewise, given any $\Pi$-algebra $\ul{A}$, the group $\ul{A}(S^n)$ will be denoted $A_n$. Taking the homotopy groups $\pi_* X$ defines a functor $\pi_* \colon \Ho \Topp_* \to \PiAlg$ sending $X$ to its homotopy $\Pi$-algebra.

\begin{definition}
A $\Pi$-algebra $\ul{A}$ is called \Def{realizable} if there is a space $X$ together with an isomorphism $\ul{A} \simeq \pi_* X$ of $\Pi$-algebras. Such a space $X$ is called a \Def{realization} of $\ul{A}$.
\end{definition}

\begin{example}
A $\Pi$-algebra concentrated in a single degree $n$ is the same as a group $A_n$, which is abelian if $n \geq 2$. All such $\Pi$-algebras are realizable (uniquely up to weak equivalence), and the Eilenberg-MacLane space $K(A_n,n)$ is a realization of this $\Pi$-algebra.
\end{example}

In general, one has the following \textbf{realization problem}: Given a $\Pi$-algebra $\ul{A}$, is $\ul{A}$ rea\-li\-zable by a space? Here, one must realize not only the homotopy groups, but also the prescribed homotopy operations.

\subsection*{Background on the problem}

One has the following classic example due to Quillen.

\begin{example}
Let $\ul{A}$ be a simply-connected rational $\Pi$-algebra, i.e., satisfying $A_1 = 0$ and $A_n$ is a rational vector space. Then $\ul{A}$ is realizable. In fact, the category of such $\Pi$-algebras is equivalent to the category of reduced graded Lie algebras, and each such Lie algebra is the Samelson product Lie algebra of a space \cite[Theorem I]{Quillen69}.
\end{example}

\begin{example} \label{NonSimply}
A $\Pi$-algebra concentrated in degrees $1$ and $n$ consists of a group $A_1$ and an $A_1$-module $A_n$, and can be realized by a generalized Eilenberg-MacLane space \cite{Whitehead49}. Moreover, the moduli space of realizations is described in \cite[Theorem 3.4, Corollary 3.5]{Frankland11}.
\end{example}

\begin{example} \label{k1}
A $\Pi$-algebra concentrated in two \emph{consecutive} degrees $n, n+1$ (with $n \geq 2$) consists of two abelian groups $A_n$ and $A_{n+1}$ together with a homomorphism $\Ga_n^1 (A_n) \to A_{n+1}$, where the functor $\Ga_n^1$ is given by
\[
\Ga_n^1(A_n) = 
\begin{cases}
\Ga(A_n) &\text{for } n=2 \\
A_n \ot \Z/2 &\text{for } n \geq 3
\end{cases}
\]
where $\Ga$ denotes Whitehead's quadratic functor. The structure map $\Ga_n^1(A_n) \to A_{n+1}$ corresponds to precomposition $\eta^* \colon A_n \to A_{n+1}$ by the Hopf map $\eta \colon S^{n+1} \to S^n$. More precisely, $\eta^* \colon A_n \to A_{n+1}$ is a quadratic map when $n=2$ (resp. a linear map of order $2$ when $n \geq 3$), and therefore corresponds by adjunction to a map of abelian groups $\Ga_n^1(A_n) \to A_{n+1}$.

All such $\Pi$-algebras are realizable. This follows from J.H.C. Whitehead's homotopy classification of simply connected $4$-dimensional CW-complexes in terms of the certain exact sequence \cite{Whitehead50}; see also \cite[Theorem 3.3 (A)]{Baues00}. Moreover, the moduli space of realizations is described in \cite[Theorem 5.1]{Frankland11}.
\end{example}

\begin{example}
A $\Pi$-algebra concentrated in a stable range can be identified with a module over the stable homotopy ring $\pi_*^S$, i.e., the homotopy groups of the sphere spectrum; see Section \ref{sec:Stable}. Our results provide examples of such modules that are not realizable (by a space or, equivalently, by a spectrum).
\end{example}

For more background on $\Pi$-algebras, see for example \cite[\S 4]{Stover90} \cite[\S 3.1]{Blanc90} \cite[\S 2]{Blanc93} \cite[\S 2]{Dwyer94} \cite[\S 4]{Blanc04}. For literature on the realization problem for $\Pi$-algebras and some generalizations, see for example \cite{Blanc95} \cite{Blanc99} \cite{Blanc04} \cite{Blanc06}.

\subsection*{Main results and organization}

In Section \ref{sec:2stage}, we describe $\Pi$-algebras concentrated in two degrees in terms of homotopy groups of spheres (Proposition \ref{GammaAdditive}). Section \ref{sec:Metastable} is devoted to the metastable case in degrees $n$ and $2n-1$ (Proposition \ref{GammaQuadratic}).

Section \ref{sec:Criterion} explains the main result of this paper, which solves the realization problem for $\Pi$-algebras concentrated in two degrees. Theorem \ref{Criterion} provides a necessary and sufficient condition for such a $\Pi$-algebra to be realizable, in terms of homology of Eilenberg-MacLane spaces.

Section \ref{sec:Stable} specializes to the stable case. In Section \ref{sec:NonRealiz}, we provide infinite families of non-realizable examples, using elements in the image of the $J$-homomorphism (Propositions \ref{Alpha} and \ref{DividedAlpha}). Section \ref{sec:Proofs} contains proofs and technical material that would have otherwise cluttered the exposition.

\subsection*{Notations and conventions}

All tensor products will be over $\Z$ unless otherwise stated, so that we write $\ot \dfn \ot_{\Z}$.

A $\Pi$-algebra $\ul{A}$ is called \Def{$m$-truncated} if it satisfies $A_i = 0$ for $i > m$ and \Def{$m$-connected} if it satisfies $A_i = 0$ for $i \leq m$. We will be working with $\Pi$-algebras concentrated in degrees $n, n+1, \ldots, n+k$ for integers $n \geq 2$ and $k \geq 0$, in other words, $(n-1)$-connected $(n+k)$-truncated $\Pi$-algebras. We adopt the following notation, which suggests ``starting in degree $n$ at the bottom and going up $k$ degrees'':
\begin{itemize}
 \item $\PiAlg_n$ is the full subcategory of $\PiAlg$ consisting of $(n-1)$-connected $\Pi$-algebras.
 \item $\PiAlg_n^k$ is the full subcategory of $\PiAlg$ consisting of $\Pi$-algebras concentrated in degrees $n$ to $n+k$.
\end{itemize}
We use a similar convention for categories of spheres of certain dimensions:
\begin{itemize}
 \item $\PI_n$ is the full subcategory of $\PI$ consisting of wedges of spheres of dimensions at least $n$.
 \item $\PI_n^k$ is the full subcategory of $\PI$ consisting of wedges of spheres of dimensions from $n$ to $n+k$.
\end{itemize}
We will use analogous notations for the stable picture in Section \ref{sec:Proofs}.

\section{Homotopy operation functors} \label{sec:2stage}

In this section, we first recall the machinery of \cite[\S 1]{Baues00} encoding homotopy operations inductively, one degree at a time. Then, we specialize to $\Pi$-algebras concentrated in two degrees.

\subsection*{Truncated $\Pi$-algebras} 

The Postnikov truncation functor $P_{n+k-1} \colon \PiAlg_n^k \to \PiAlg_n^{k-1}$ admits a left adjoint $L$. As in \cite[Definition 1.5]{Baues00}, consider the \Def{homotopy operation functor} $\Ga_n^k \colon \PiAlg_n^{k-1} \to \Ab$ defined as  the composite
\[
\xymatrix{
\PiAlg_n^{k-1}  \ar@/^2pc/[rr]^-{\Ga_n^k} \ar[r]^-L & \PiAlg_n^{k} \ar[r]^-{\pi_{n+k}} & \Ab \\
}
\]
where $\pi_{n+k} \colon  \PiAlg_n^{k} \to \Ab$ is evaluation on the sphere $S^{n+k}$, which extracts from a $\Pi$-algebra $\ul{A}$ the abelian group $A_{n+k} = \ul{A}(S^{n+k})$. Using these functors, $\PiAlg_n^k$ can be described as an iterated comma category
\[
\PiAlg_n^k \cong \Ga_n^k \Ab
\]
as in \cite[Proposition 1.6]{Baues00}. Note that the inductive process starts with $\PiAlg_n^0 \cong \Ab$ (assuming $n \geq 2$). Let us recall some terminology and notation for comma categories \cite[Definition 1.1]{Baues99} \cite[\S 1.5]{Baues00}.

\begin{definition} \label{CommaCat}
Let $\CC$ be a category and let $\Ga \colon \CC \to \AAA$ be a functor. Then we obtain the category $\Ga \AAA$ as follows. An object is a triple $(X,A,\eta)$ where $X$ is an object of $\CC$ and $\eta \colon \Ga X \to A$ is a morphism in $\AAA$. A morphism $(X,A,\eta) \to (Y,B,\la)$ in $\Ga \AAA$ is a pair $(f,g)$ where $f \colon X \to Y$ is a morphism in $\CC$ such that the diagram
\[
\xymatrix{
\Ga X \ar[r]^{\Ga f} \ar[d]^{\eta} & \Ga Y \ar[d]^{\la} \\
A \ar[r]^g & B \\
}
\]
commutes in $\AAA$. We call $\Ga \AAA$ the \Def{comma category} of $\Ga$. An object $(X,A,\eta)$ of $\Ga \AAA$ is also denoted by $\eta$.
\end{definition}

Comma categories are also described in \cite[\S 2.6]{Maclane98}, where our $\Ga \AAA$ is denoted $(\Ga \downarrow 1_{\AAA})$ or $(\Ga \downarrow \AAA)$. We will use the following facts about comma categories, whose proofs are straightforward.

\begin{lemma} \label{FunctorComma}
Functors $F,G \colon \CC \to \DD$ are isomorphic if and only if the comma categories $F \DD, G \DD$ are equivalent as categories over $\CC \x \DD$. Here the projection $F \DD \to \CC \x \DD$ sends an object $(X,A,\eta)$ to $(X,A)$.
\end{lemma}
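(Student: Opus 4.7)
The plan is to prove both directions directly. For the forward direction, starting from a natural isomorphism $\phy \colon F \Ra G$, I would define an explicit functor $\Psi_{\phy} \colon F\DD \to G\DD$ by sending $(X, A, \eta \colon FX \to A)$ to $(X, A, \eta \circ \phy_X^{-1} \colon GX \to A)$ and acting as the identity on morphism pairs $(f,g)$. Checking that $\Psi_{\phy}(f,g)$ satisfies the compatibility square required in $G\DD$ is a one-line application of the naturality of $\phy^{-1}$; $\Psi_{\phy}$ commutes strictly with the projections to $\CC \x \DD$ by construction; and the analogous $\Psi_{\phy^{-1}}$ built from $\phy$ provides a strict two-sided inverse. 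Hence $\Psi_{\phy}$ is actually an isomorphism over $\CC \x \DD$, in particular an equivalence.

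For the backward direction, I would recover a natural isomorphism by testing a given equivalence on the universal objects $(X, FX, \id_{FX})$. So let $\Psi \colon F\DD \to G\DD$ be an equivalence over $\CC \x \DD$ with quasi-inverse $\Psi'$. Because $\Psi$ is over $\CC \x \DD$, the image $\Psi(X, FX, \id_{FX})$ must have first two coordinates $(X, FX)$, hence is of the form $(X, FX, \phy_X)$ for a unique morphism $\phy_X \colon GX \to FX$; similarly $\Psi'(X, GX, \id_{GX}) = (X, GX, \psi_X)$ defines $\psi_X \colon FX \to GX$.

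Naturality of $\phy$ falls out of applying $\Psi$ to the morphism $(f, Ff) \colon (X, FX, \id_{FX}) \to (Y, FY, \id_{FY})$ in $F\DD$ induced by any $f \colon X \to Y$ in $\CC$. Because $\Psi$ is the identity on $(f,g)$-components, its image in $G\DD$ is the same pair, now interpreted as a morphism $(X, FX, \phy_X) \to (Y, FY, \phy_Y)$, whose defining compatibility is exactly the naturality square $\phy_Y \circ Gf = Ff \circ \phy_X$. To see that each $\phy_X$ is invertible, I would compute $(\Psi' \circ \Psi)(X, FX, \id_{FX})$: applying $\Psi'$ to the morphism $(\id_X, \phy_X) \colon (X, GX, \id_{GX}) \to (X, FX, \phy_X)$ in $G\DD$ and using $\Psi'(X, GX, \id_{GX}) = (X, GX, \psi_X)$ pins down $\Psi'(X, FX, \phy_X) = (X, FX, \phy_X \circ \psi_X)$. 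The natural isomorphism $\Psi' \circ \Psi \cong \id_{F\DD}$ evaluated at $(X, FX, \id_{FX})$ then provides an isomorphism $(\al, \be)$ in $F\DD$ from $(X, FX, \phy_X \circ \psi_X)$ to $(X, FX, \id_{FX})$, and its compatibility relation $F\al = \be \circ \phy_X \circ \psi_X$ exhibits $\phy_X \circ \psi_X$ as an isomorphism. The symmetric computation for $\Psi \circ \Psi' \cong \id_{G\DD}$ shows that $\psi_X \circ \phy_X$ is an isomorphism, so $\phy_X$ is itself invertible.

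The argument is conceptually transparent but notationally fussy. The step that deserves attention is the naturality computation, which depends on interpreting ``functor over $\CC \x \DD$'' strictly enough that $\Psi$ preserves morphism pairs on the nose; under that reading, which is the natural one for the projection $F\DD \to \CC \x \DD$, every remaining verification reduces to reading off a commuting square from Definition \ref{CommaCat}, consistent with the author's assertion that the proof is straightforward.
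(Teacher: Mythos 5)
Your argument is correct and is exactly the ``straightforward'' proof the paper has in mind: the paper gives no proof of Lemma \ref{FunctorComma} at all, so your write-up simply supplies the omitted details (transport of the structure map along $\phy_X^{-1}$ for the forward direction, evaluation of the equivalence on the universal objects $(X,FX,\id_{FX})$ for the converse). The only point worth keeping in mind is the one you already flag: the converse needs ``equivalent over $\CC\x\DD$'' to mean that the equivalence, its quasi-inverse, and the comparison isomorphisms all live strictly over $\CC\x\DD$, which is the intended reading.
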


\begin{lemma} \label{CommaAdditive}
Let $\CC, \DD$ be additive categories and $F \colon \CC \to \DD$ a functor. Then the comma category $F \DD$ is additive if and only if $F$ is an additive functor.
\end{lemma}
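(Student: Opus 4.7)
The plan is to handle the two directions separately. The forward implication ($F$ additive $\Ra$ $F\DD$ additive) proceeds by direct construction: define the zero object of $F\DD$ to be $(0_\CC, 0_\DD, 0)$ (well-defined because $F(0_\CC) = 0_\DD$), binary biproducts as $(X \op Y, A \op B, \eta \op \la)$ (well-defined because $F(X \op Y) = FX \op FY$), and the abelian group structure on hom sets componentwise: $(f_1, g_1) + (f_2, g_2) \dfn (f_1 + f_2, g_1 + g_2)$. The only non-routine check is that componentwise addition respects the commuting square condition, which reduces to $\la F(f_1 + f_2) = \la(Ff_1 + Ff_2)$---precisely additivity of $F$. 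Bilinearity of composition and the biproduct relations are inherited from $\CC$ and $\DD$.

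For the converse, assume $F\DD$ is additive; the first step is to establish $F(0_\CC) = 0_\DD$. The embedding $\io \colon \CC \to F\DD$, $X \mapsto (X, FX, \id_{FX})$, is left adjoint to the projection $p_\CC \colon F\DD \to \CC$, $(X, A, \eta) \mapsto X$, so $\io$ preserves initial objects: $\io(0_\CC) = (0_\CC, F(0_\CC), \id_{F(0_\CC)})$ is initial in $F\DD$. In an additive category every initial object is also terminal, so $\io(0_\CC)$ is terminal. Applying terminality to the object $(0_\CC, 0_\DD, 0)$ (always defined, since $0_\DD$ is terminal in $\DD$), the commuting square for the unique morphism $(0_\CC, 0_\DD, 0) \to \io(0_\CC)$ reduces to $\id_{F(0_\CC)} = 0$, forcing $F(0_\CC) = 0_\DD$. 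In particular $F$ sends every zero morphism to zero, since each factors through $0_\CC$.

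With $F(0_\CC) = 0_\DD$ in hand, the functor $\ka \colon \DD \to F\DD$, $A \mapsto (0_\CC, A, 0)$, is well-defined, and a direct check gives $\ka \dashv p_\DD$. Hence $p_\CC$ and $p_\DD$ are both right adjoints, preserve biproducts, and are therefore additive functors between additive categories. The joint functor $(p_\CC, p_\DD) \colon F\DD \to \CC \x \DD$ is additive and faithful---a morphism in $F\DD$ is literally a pair whose projections are its two components---so the abelian group structure on each $\Hom_{F\DD}$-set is the componentwise one. To conclude: for $f_1, f_2 \colon X \to Y$ in $\CC$, each $(f_i, Ff_i)$ is a morphism $\io X \to \io Y$, so their sum $(f_1 + f_2, Ff_1 + Ff_2)$ must again be a morphism $\io X \to \io Y$; the defining square condition forces the second component to equal $F$ of the first, giving $F(f_1 + f_2) = Ff_1 + Ff_2$, as desired. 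The main obstacle is the first step of the converse, pinning down $F(0_\CC) = 0_\DD$; after this, the $\ka$-adjunction and the componentwise-addition argument are essentially formal.
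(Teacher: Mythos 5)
Your proof is correct. The paper itself offers no argument for this lemma---it is listed among ``facts about comma categories, whose proofs are straightforward''---so there is no authorial proof to compare against. Your forward direction is the expected direct verification, with the one non-routine point correctly isolated: closure of the hom-sets of $F\DD$ under componentwise addition is precisely additivity of $F$ on morphisms. For the converse, the route through the adjunctions $\io \dashv p_{\CC}$ and $\ka \dashv p_{\DD}$ is a clean way to force $F(0_{\CC}) \cong 0_{\DD}$ and to see that the group structure on each $\Hom_{F\DD}$-set must be the componentwise one; the final step, that the sum $(f_1+f_2,\, Ff_1+Ff_2)$ must still satisfy the commuting-square condition for morphisms $\io X \to \io Y$, correctly extracts $F(f_1+f_2)=Ff_1+Ff_2$. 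Two cosmetic points only: in the forward direction the structure map of the biproduct should be $\eta \op \la$ precomposed with the canonical isomorphism $F(X \op Y) \cong FX \op FY$ rather than a literal equality; and $\ka$ is already well defined as a functor without knowing $F(0_{\CC}) \cong 0_{\DD}$---it is the adjunction $\ka \dashv p_{\DD}$ that requires that fact, which you do establish first, so the logical order is sound.
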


\subsection*{$\Pi$-algebras concentrated in two degrees} 

Let $\PiAlg(n,n+k)$ be the full subcategory of $\PiAlg$ consisting of $\Pi$-algebras concentrated in degrees $n$ and $n+k$ for some $n,k \geq 1$; these are sometimes called \Def{$2$-stage} $\Pi$-algebras. In light of Example \ref{NonSimply}, we will assume $n \geq 2$. The category $\PiAlg(n,n+k)$ can be described as a comma category as follows.

\begin{proposition} \label{Gamma2}
Let $n \geq 2$. There is a unique functor (up to natural isomorphism) $\tild{\Ga}_n^k \colon \Ab \to \Ab$ yielding an isomorphism
\[
\PiAlg(n,n+k) \cong \tild{\Ga}_n^k \Ab
\]
of categories over $\Ab \x \Ab$.

For example, in the case $k=1$, the functor $\tild{\Ga}_n^1 = \Ga_n^1$ is described in Example \ref{k1}.
\end{proposition}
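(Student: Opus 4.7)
The plan is to obtain $\tild{\Ga}_n^k$ by restricting the inductive comma category description $\PiAlg_n^k \cong \Ga_n^k \Ab$ to the full subcategory $\PiAlg(n,n+k) \subset \PiAlg_n^k$ of $\Pi$-algebras with vanishing intermediate degrees. The key input is the fully faithful inclusion $\io \colon \Ab \cong \PiAlg_n^0 \inj \PiAlg_n^{k-1}$ sending an abelian group $A_n$ to the $\Pi$-algebra concentrated in degree $n$ with value $A_n$ (extended by zero in higher degrees). I would then define
\[
\tild{\Ga}_n^k \dfn \Ga_n^k \circ \io \colon \Ab \to \Ab.
\]

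To construct the promised isomorphism, observe that any $\ul{A} \in \PiAlg(n,n+k)$ has Postnikov truncation $P_{n+k-1} \ul{A}$ concentrated in degree $n$, hence canonically of the form $\io(A_n)$. Under the identification $\PiAlg_n^k \cong \Ga_n^k \Ab$, such an $\ul{A}$ corresponds to the triple $(\io(A_n), A_{n+k}, \eta)$ with structure map $\eta \colon \tild{\Ga}_n^k(A_n) \to A_{n+k}$. This yields a functor $\tild{\Ga}_n^k \Ab \to \PiAlg(n,n+k)$ sending $(A_n, A_{n+k}, \eta) \mapsto (\io(A_n), A_{n+k}, \eta)$ which is essentially surjective by the above characterization, fully faithful because $\io$ is, and manifestly commutes with the projections to $\Ab \x \Ab$. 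Uniqueness of $\tild{\Ga}_n^k$ up to natural isomorphism then follows from Lemma \ref{FunctorComma}, since any two candidate functors would have equivalent comma categories over $\Ab \x \Ab$.

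The main point requiring verification is the full faithfulness of $\io$: namely, that a morphism in $\PiAlg_n^{k-1}$ between two $\Pi$-algebras concentrated in degree $n$ is the same data as an abelian group homomorphism on the $n$-th groups. This is automatic because the components of such a morphism in all degrees $> n$ land in the zero group and are therefore forced to vanish. For the case $k=1$, the inclusion $\io$ is the identity functor on $\PiAlg_n^0 \cong \Ab$, so $\tild{\Ga}_n^1 = \Ga_n^1$, recovering the description given in Example \ref{k1}.
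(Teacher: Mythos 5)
Your proposal is correct and follows essentially the same route as the paper: define $\tild{\Ga}_n^k$ as the restriction of $\Ga_n^k$ along the inclusion $\Ab \cong \PiAlg_n^0 \inj \PiAlg_n^{k-1}$, identify $\PiAlg(n,n+k)$ with the comma category of this restricted functor, and deduce uniqueness from Lemma \ref{FunctorComma}. Your explicit check that $\io$ is fully faithful (because all higher components of a morphism are forced to vanish) is a detail the paper leaves implicit, but it is the same argument.
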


\begin{proof}
Uniqueness follows from \ref{FunctorComma}. For existence, take
\[
\tild{\Ga}_n^k(A_n) = \Ga_n^k(A_n, 0, \ldots, 0)
\]
where $(A_n, 0, \ldots, 0)$ denotes the (unique) object $\ul{A}$ of $\PiAlg_n^{k-1}$ with $A_{n+1} = 0$, $\ldots$, $A_{n+k-1} = 0$. In other words, $\tild{\Ga}_n^k$ is the restriction of $\Ga_n^k \colon \PiAlg_n^{k-1} \to \Ab$ to the full subcategory $\Ab \cong \PiAlg_n^0 \inj \PiAlg_n^{k-1}$. The full subcategory $\PiAlg(n,n+k)$ of $\PiAlg_n^k$ is isomorphic to the comma category of $\Ga_n^k$ restricted to objects of the form $(A_n,0, \ldots, 0)$, which is precisely the functor $\tild{\Ga}_n^k$.
\end{proof}

In particular, the equality $\tild{\Ga}_n^k = 0$ holds if and only if the projection $\PiAlg(n,n+k) \ral{\cong} \Ab \x \Ab$ is an isomorphism of categories, that is, the $\Pi$-algebra structure concentrated in degrees $n$ and $n+k$ is trivial. The corresponding $\Pi$-algebras $(A_n, A_{n+k})$ are clearly realizable, for example by a product of Eilenberg-MacLane spaces $K(A_n,n) \x K(A_{n+k},n+k)$.

\begin{remark}
By \ref{CommaAdditive} and \ref{Gamma2}, the category $\PiAlg(n,n+k)$ is additive if and only if the functor $\tild{\Ga}_n^k$ is additive. This certainly happens in the stable range, but not always (e.g. $k=2,n=3$ as in Example \ref{k2}). In fact, we will see shortly that it happens often; see Proposition \ref{GammaAdditive}.
\end{remark}

\begin{example} \label{k2}
Taking $k=2$, the formula for $\Ga_n^2$ in \cite[1.10]{Baues00} yields
\[
\tild{\Ga}_n^2(A_n) = 
\begin{cases}
0 &\text{for } n=2 \\
\La^2 (A_3) &\text{for } n=3 \\
0 &\text{for } n \geq 4
\end{cases}
\]
where $\La^2(A) := A \ot A / (a \ot a \sim 0)$ denotes the exterior square. Note that the map $\La^2 (A_3) \to A_5$ encodes the Whitehead product $[-,-] \colon A_3 \ot A_3 \to A_5$.
\end{example}

In a $\Pi$-algebra concentrated in degrees $n$ and $n+k$, any operation that factors through intermediate degrees would automatically vanish. This suggests looking at indecomposable operations, in the following sense.

\begin{definition} \label{Indecomp}
An element $x \in \pi_{n+k}(S^n)$ is called \Def{decomposable} if it admits a factorization
\[
\xymatrix{
S^{n+k} \ar[r]^-{w} & \bigvee S^n \vee \bigvee S^{n_i} \ar[r] & S^n \\ 
}
\]
where the dimensions $n_i$ satisfy $n < n_i < n+k$ and the composite $S^{n+k} \ral{w} \bigvee S^n \vee \bigvee S^{n_i} \surj \bigvee S^n$ of $w$ with the collapse map onto the first summand is null.

This means that $x$ is obtained via primary homotopy operations from elements of lower degree, possibly of degree $n$, but in a way that elements of intermediate degree (between $n$ and $n+k$) are essential. For example, the Whitehead product $[y,\io_n] \in \pi_{i+n-1}(S^n)$ with $y \in \pi_i(S^n)$, $i > n$, is decomposable. However, the Whitehead product $[\io_n,\io_n] \in \pi_{2n-1}(S^n)$ is not considered decomposable, a priori.

Let $Q_{k,n}$ denote the \Def{indecomposables} of $\pi_{n+k}(S^n)$, i.e., the quotient of $\pi_{n+k}(S^n)$ by the subgroup generated by all decomposable elements.

In the stable range $k \leq n-2$, $Q_{k,n} = Q_k^S$ does not depend on $n$. Here $Q_*^S$ denotes the indecomposables of the graded ring $\pi_*^S$ (homotopy groups of the sphere spectrum $S^0$), with respect to the augmentation $\pi_*^S \to \Z$ induced by the Hurewicz map $S^0 \to H\Z$.
\end{definition}

\begin{warning}
The definition of decomposable in \cite[\S 2.2]{Blanc90} \emph{does} include elements obtained via primary operations from elements of degree $n$. In particular, the latter definition makes \emph{every} element $x \in \pi_{n+k}(S^n)$ decomposable, since it is obtained as a precomposition of the identity class, $x = \io_n \circ x = x^* (\io_n)$, as noted in \cite[\S 2.2.2]{Blanc90}. Definition \ref{Indecomp} should be thought of as ``decomposable via intermediate degrees''.
\end{warning}

\begin{remark}
The subgroup generated by all decomposables is in fact generated by compositions of the form $S^{n+k} \to S^m \to S^n$ (with $n < m < n+k$) and $3$-fold iterated Whitehead products of the identity map $\io_n \in \pi_n(S^n)$ of even-dimensional spheres. This follows from the Barcus-Barratt formula and the fact that all $4$-fold iterated Whitehead products of the identity class for spheres vanish \cite[Theorem XI.8.8]{Whitehead78}. See the discussion before \cite[Lemma 3.6]{Blanc93}.
\end{remark}

\begin{proposition} \label{GammaAdditive}
Assuming $k \neq n-1$, we have
\[
\tild{\Ga}_n^k(A_n) = A_n \ot Q_{k,n}.
\]
In particular, in the stable range $k \leq n-2$, we have
\[
\tild{\Ga}_n^k(A_n) = A_n \ot Q^S_k.
\]
\end{proposition}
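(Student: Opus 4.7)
My plan is to compute $\tild{\Ga}_n^k(A_n) = \pi_{n+k}(L(A_n, 0, \ldots, 0))$ directly from the universal property of the left adjoint $L$. Concretely, $\tild{\Ga}_n^k(A_n)$ is the abelian group generated by formal expressions $\phi(a_1, \ldots, a_j)$ with $\phi \in \pi_{n+k}(\bigvee_{i=1}^j S^n)$ and $a_i \in A_n$, subject to the $\Pi$-algebra relations together with the vanishing of every operation that factors through an intermediate degree $n+1, \ldots, n+k-1$ (forced since those groups are zero in $(A_n, 0, \ldots, 0)$).

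First I would treat the case $A_n = \Z^j$ free of finite rank. By the Hilton--Milnor theorem, $\pi_{n+k}(\bigvee_{i=1}^j S^n) \cong \bigoplus_w \pi_{n+k}(S^{n_w})$, where $w$ ranges over a basis of basic products of length $\ell_w \geq 1$ in $j$ formal generators of degree $n$, and $n_w = \ell_w(n-1)+1$. For length $\ell_w = 1$, there are $j$ summands each isomorphic to $\pi_{n+k}(S^n)$, and quotienting by the decomposable operations of Definition \ref{Indecomp} yields $Q_{k,n}$ per summand. For length $\ell_w \geq 2$, the associated operation is an iterated Whitehead product of degree-$n$ inputs, which factors through a length-two sub-product landing in degree $2n-1$. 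Under $k \neq n-1$: if $k < n-1$ then $n_w \geq 2n-1 > n+k$, so $\pi_{n+k}(S^{n_w}) = 0$ by connectivity; if $k > n-1$ then $n < 2n-1 < n+k$, and the intermediate group $A_{2n-1} = 0$ forces the entire operation to vanish. Either way only the length-one summands survive, giving $\tild{\Ga}_n^k(\Z^j) \cong \Z^j \ot Q_{k,n}$, naturally in linear maps of free abelian groups.

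To pass from the free case to arbitrary $A_n$, I would use that $\tild{\Ga}_n^k$ is additive (the absence of multi-generator cross terms matches Lemma \ref{CommaAdditive} applied to Proposition \ref{Gamma2}) and right-exact: since only unary, linear operations survive, a surjection $F \surj A_n$ of abelian groups induces a surjection $F \ot Q_{k,n} \surj \tild{\Ga}_n^k(A_n)$ whose kernel is the image of $\ker(F \surj A_n) \ot Q_{k,n}$. Combined with the value $\tild{\Ga}_n^k(\Z) = Q_{k,n}$ from the free case, this yields $\tild{\Ga}_n^k(A_n) \cong A_n \ot Q_{k,n}$ by the universal property of the tensor product.

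The main obstacle is verifying that every multi-generator Hilton--Milnor summand genuinely factors through the intermediate degree $2n-1$ in the $\Pi$-algebraic sense, so that $A_{2n-1} = 0$ actually kills its contribution when the operation is applied to elements of the target $\Pi$-algebra. The Remark preceding the statement, invoking the Barcus--Barratt formula and the vanishing of $4$-fold iterated Whitehead products of the identity class, supplies the structural ingredient needed for this reduction.
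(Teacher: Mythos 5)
Your proposal follows essentially the same route as the paper's: a Hilton--Milnor analysis of $\pi_{n+k}(\bigvee S^n)$ showing that the length-one summands contribute one copy of $Q_{k,n}$ each while the higher basic products die (by connectivity when $k<n-1$, or because they factor through the intermediate degree $2n-1$ when $k>n-1$), followed by a reduction from free to arbitrary $A_n$. The free case is right in outline, but note that identifying the quotient with $\bigoplus Q_{k,n}$ also requires Hilton's formula (the distributive law): the composition pairing $\pi_n(S)\times\pi_{n+k}(S^n)\to\pi_{n+k}(S)$ is \emph{not} additive in the first variable, and only becomes so after modding out decomposables. This is exactly the paper's Step 2, and it is what makes your slogan ``only unary, linear operations survive'' literally true rather than a consequence of the direct-sum decomposition alone.

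The one genuine gap is the passage from free to arbitrary $A_n$. You assert that $\tild{\Ga}_n^k$ is right exact --- that a surjection $F\surj A_n$ induces a surjection $F\ot Q_{k,n}\surj\tild{\Ga}_n^k(A_n)$ with the expected kernel --- but this is essentially the statement to be proved, and it does not follow from additivity on free groups; moreover Lemma \ref{CommaAdditive} runs in the wrong direction for your parenthetical (it deduces additivity of the comma category \emph{from} additivity of the functor, it does not establish the latter). The functor $\tild{\Ga}_n^k=\pi_{n+k}\circ L\circ\io$ is a composite in which $L$ and $\io$ preserve all colimits but $\pi_{n+k}\colon\PiAlg_n^{k}\to\Ab$ does not (it already fails on coproducts, because of Whitehead products), so right exactness is not automatic --- and indeed for $k=n-1$ the analogous functor is quadratic, not even additive. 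The paper's fix is to replace the cokernel presentation $R\to F\surj A_n$ by the associated \emph{reflexive} coequalizer $R\op F\rightrightarrows F$, observe that reflexive coequalizers of $\Pi$-algebras are computed on underlying graded sets and hence are preserved by $\pi_{n+k}$, and check that $-\ot Q_{k,n}$ preserves them as well. You need this step (or the equivalent explicit cokernel computation from a topological free presentation $T\to S$ as in \cite[\S 7]{Baues99}) to close the argument; everything else in your sketch is a matter of bookkeeping that the paper's Steps 1 and 2 carry out.
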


\begin{proof}
See Section \ref{sec:Proofs}.
\end{proof}

\begin{corollary} \label{TrivialOps}
For all $k$ and $n$ with $k \neq n-1$ such that $Q_{k,n} = 0$ holds, $2$-stage $\Pi$-algebras concentrated in degrees $n$ and $n+k$ have trivial homotopy operations and are thus automatically realizable.
\end{corollary}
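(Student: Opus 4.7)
The plan is to combine Proposition~\ref{GammaAdditive} with Proposition~\ref{Gamma2} and the observation made immediately after Proposition~\ref{Gamma2}. Since $k \neq n-1$, Proposition~\ref{GammaAdditive} gives $\tild{\Ga}_n^k(A_n) = A_n \ot Q_{k,n}$. The hypothesis $Q_{k,n} = 0$ therefore forces $\tild{\Ga}_n^k(A_n) = 0$ for every abelian group $A_n$, i.e.\ the functor $\tild{\Ga}_n^k \colon \Ab \to \Ab$ is identically zero.

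Now, by Proposition~\ref{Gamma2}, there is an isomorphism of categories
\[
\PiAlg(n,n+k) \cong \tild{\Ga}_n^k \Ab
\]
over $\Ab \x \Ab$. When $\tild{\Ga}_n^k = 0$, every object of the comma category is a triple $(A_n, A_{n+k}, \eta)$ with $\eta \colon 0 \to A_{n+k}$, and there is a unique such $\eta$; similarly every morphism of the form required in Definition~\ref{CommaCat} is automatically admissible. Hence the projection $\PiAlg(n,n+k) \to \Ab \x \Ab$ is an isomorphism of categories, which is precisely the statement (already noted after the proof of Proposition~\ref{Gamma2}) that the $\Pi$-algebra structure concentrated in degrees $n$ and $n+k$ is trivial.

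It then remains to exhibit a realization of an arbitrary $\Pi$-algebra $\ul{A} \in \PiAlg(n,n+k)$. Because the operations are trivial, $\ul{A}$ is determined (up to isomorphism) by the pair of abelian groups $(A_n, A_{n+k})$, and the product of Eilenberg-MacLane spaces
\[
X \dfn K(A_n, n) \x K(A_{n+k}, n+k)
\]
has $\pi_n X = A_n$, $\pi_{n+k} X = A_{n+k}$, and trivial homotopy groups in all other degrees; moreover all homotopy operations between these two degrees vanish on $X$ for general reasons (product of Eilenberg-MacLane spaces). Hence $\pi_* X \cong \ul{A}$ as $\Pi$-algebras, and $\ul{A}$ is realizable. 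There is no substantive obstacle here: once Proposition~\ref{GammaAdditive} is in hand the corollary is a direct unwinding of the comma-category description of $\PiAlg(n,n+k)$.
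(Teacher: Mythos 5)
Your proposal is correct and follows essentially the same route as the paper: the paper gives no separate proof for this corollary, but the intended argument is exactly the combination of Proposition~\ref{GammaAdditive} (which forces $\tild{\Ga}_n^k = 0$ when $Q_{k,n} = 0$ and $k \neq n-1$) with the remark following Proposition~\ref{Gamma2}, which observes that $\tild{\Ga}_n^k = 0$ is equivalent to the projection $\PiAlg(n,n+k) \to \Ab \x \Ab$ being an isomorphism and that such trivial $\Pi$-algebras are realized by $K(A_n,n) \x K(A_{n+k},n+k)$. Your unwinding of the comma category and the verification that the product of Eilenberg-MacLane spaces has vanishing operations are exactly what the paper takes for granted.
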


\begin{example}
Every $\Pi$-algebra concentrated in degrees $2$ and $2+k$ is realizable. The case $k=1$ is settled in Example \ref{k1}. For the case $k \geq 2$, note that the Hopf map $\eta \colon S^3 \to S^2$ induces an isomorphism $\pi_{2+k} S^3 \ral{\simeq} \pi_{2+k} S^2$. Hence every element in $x \in \pi_{2+k} S^2$ is in fact a decomposable element $\eta \circ x'$ for some $x' \in \pi_{n+k} S^3$. Thus we have $Q_{k,2} = 0$ and the result follows from \ref{TrivialOps}.
\end{example}

As noted in Example \ref{k1}, the realization problem is solved in the affirmative in the case $k=1$. The same is true for the case $k=2$.

\begin{proposition} \label{k2Realizable}
Every $\Pi$-algebra concentrated in degrees $n$ and $n+2$ is realizable.
\end{proposition}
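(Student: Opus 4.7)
I would split the proof according to $n$. If $n = 2$ or $n \geq 4$, Example \ref{k2} gives $\tild{\Ga}_n^2 = 0$, so by Proposition \ref{Gamma2} the category $\PiAlg(n, n+2)$ is isomorphic to $\Ab \x \Ab$, meaning there are no non-trivial operations. Any such $\Pi$-algebra is then realized by the product $K(A_n, n) \x K(A_{n+2}, n+2)$ of Eilenberg--MacLane spaces.

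The remaining case is $n = 3$, where the $\Pi$-algebra consists of abelian groups $A_3, A_5$ together with a homomorphism $\varphi \colon \La^2 A_3 \to A_5$ encoding the Whitehead product $[-,-]$. (The map factors through $\La^2 A_3$ rather than merely $A_3 \otimes A_3$ because $\pi_4 = 0$ forces $[\alpha, \alpha] = \alpha_*[\io_3, \io_3] = \alpha \circ \eta^2 = (\alpha \circ \eta) \circ \eta = 0$.) The plan is to realize $(A_3, A_5, \varphi)$ as a principal fibration $K(A_5, 5) \to X \to K(A_3, 3)$ classified by a $k$-invariant $k \in H^6(K(A_3, 3); A_5)$. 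Sending each such $k$ to the Whitehead product on $\pi_3$ of the resulting total space defines a natural transformation
\[
\tau_{A, B} \colon H^6(K(A, 3); B) \to \Hom(\La^2 A, B),
\]
and the problem reduces to showing that $\tau_{A, B}$ is surjective for every pair $(A, B)$.

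To prove surjectivity I would first handle the universal case $A = \Z^2$, $B = \Z$, where $\Hom(\La^2 \Z^2, \Z) \cong \Z$ is cyclic. The external cup product $\io_3 \cup \io_3' \in H^6(K(\Z, 3) \x K(\Z, 3); \Z)$ should map under $\tau$ to a generator: the identification rests on the classical fact that the top cell of $S^3 \x S^3$ is attached by $[\io_3, \io_3'] \colon S^5 \to S^3 \vee S^3$, together with the standard pairing between a 2-stage $k$-invariant and attaching maps of cofibers mapping into the base. Naturality of $\tau$ in $B$ and additivity in $A$ then upgrade this to surjectivity of $\tau_{F, B}$ for all free abelian $F$ and all $B$. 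For general $A_3$, I would choose a free presentation $F \surj A_3$, realize the pulled-back $\Pi$-algebra $(F, A_5, \varphi \circ \La^2(F \surj A_3))$ as a space $Y$ from the free case, and then obtain $X$ by attaching $4$-cells to $Y$ along generators of $\ker(F \surj A_3) \subset \pi_3 Y$ (attaching further cells as needed to repair any side-effects on $\pi_4$ and $\pi_5$ without disturbing $\varphi$).

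The main obstacle is the universal case: precisely identifying the Whitehead product induced by the cup product $k$-invariant and confirming that it realizes a generator of $\Hom(\La^2 \Z^2, \Z)$. This is a classical but delicate calculation relating the transgression in the fibration $K(\Z, 5) \to X \to K(\Z, 3) \x K(\Z, 3)$ to the attaching map of the top cell of $S^3 \x S^3$.
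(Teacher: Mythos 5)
Your treatment of $n=2$ and $n\geq 4$ is correct and agrees with the paper (the paper derives $\tild{\Ga}_n^2=0$ from $Q_{2,2}=0$ and $Q_2^S=0$ via Corollary \ref{TrivialOps}; citing Example \ref{k2} amounts to the same thing). For $n=3$ you take a genuinely different route: the paper simply invokes the obstruction theory of \cite[Theorem 3.3 (B)]{Baues00} and observes that the obstruction $O(\ul{A})=\eta_2\circ E_3(\eta_1)$ vanishes because $E_3(\eta_1)$ factors through $A_4=0$, whereas you propose a direct construction via $k$-invariants. Your universal computation is classical and fine: $\io_3\cup\io_3'$ does detect $[\io_3,\io_3']$, which generates the $\Z$-summand $\pi_5S^5\subset\pi_5(S^3\vee S^3)$ given by Hilton's theorem. (A small slip: $[\io_3,\io_3]\neq\eta^2$; in fact $[\io_3,\io_3]=0$ since $S^3$ is an H-space --- this is the vanishing of $P$ in $\pi_5\{S^3\}$ recorded in Section \ref{sec:Metastable}. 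Your fallback argument via $\pi_4=0$ gives the factorization through $\La^2 A_3$ anyway.)

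The genuine gap is the descent from free $A_3$ to arbitrary $A_3$. Surjectivity of $\tau_{F,B}$ for free $F$ does not formally imply surjectivity of $\tau_{A,B}$: in your own framework this would require the class $k\in H^6(K(F,3);B)$ realizing $\varphi\circ\La^2(F\surj A_3)$ to be pulled back along $K(F,3)\to K(A_3,3)$, which is not automatic since cohomology points the wrong way. Equivalently, by Corollary \ref{SplitInj} what you need is that $\ga_{K(A_3,3)}\colon\La^2 A_3\to \Ga_6 K(A_3,3)\cong H_6K(A_3,3)$ is split injective for \emph{every} abelian group, and this is not a formal consequence of the free case. Your topological repair --- attaching $4$-cells along a basis of $R=\ker(F\surj A_3)$ --- does give $\pi_3Y'=A_3$ and $\pi_4Y'=0$, but by Blakers--Massey it also produces a short exact sequence $0\to A_5\to\pi_5Y'\to\bigoplus_R\Z/2\to 0$, with one $\Z/2$ (an $\eta$ on each new $4$-cell) per relation. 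Killing these classes with $6$-cells replaces $\pi_5$ by $\pi_5Y'/\langle\tild{\eta}_i\rangle$, and if the extension is non-split (so that $2\tild{\eta}_i$ is a nonzero class $c_i\in A_5/2A_5$, a phenomenon familiar from $\pi_{n+2}M(\Z/2,n)\cong\Z/4$) this quotient damages $A_5$ itself. Showing that the extension splits, or that the attaching data can be modified so that it does, is precisely the content of the obstruction the paper disposes of via \cite[Theorem 3.3 (B)]{Baues00}; the phrase ``attaching further cells as needed to repair any side-effects without disturbing $\varphi$'' is asserting, not proving, the theorem at this point.
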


\begin{proof}
In the stable range $n \geq 4$, it follows from \ref{TrivialOps} and $Q_2^S = 0$, because of $\pi_2^S = \Z/2 \lan \eta^2 \ran$. Likewise for $n=2$, it follows from the fact $Q_{2,2} = 0$, obtained from $\pi_4(S^2) = \Z/2 \lan \eta \circ \eta \ran$.

The only case where the $\Pi$-algebra data is non-trivial is $n = 3$, with $\tild{\Ga}_3^2 = \La^2$ as noted in Example \ref{k2}. In that case, the $\Pi$-algebra $\ul{A}$ is realizable if and only if the obstruction $O(\ul{A}) = \eta_2 \circ E_3(\eta_1)$ described in \cite[Theorem 3.3 (B)]{Baues00} vanishes. The map $E_3(\eta_1)$ described in \cite[\S 3.2]{Baues00} factors through $A_4$ and is therefore zero in our case (with $A_4 = 0$).
\end{proof}

\section{Metastable case} \label{sec:Metastable}

The situation is somewhat more complicated for the critical dimension $k = n-1$, which is in the metastable range. Let us recall some terminology and basic facts from \cite{Baues94}.

\begin{definition}
\cite[Definition 2.1]{Baues94} A \Def{quadratic module}
\[
M = \left( M_e \ral{H} M_{ee} \ral{P} M_e \right)
\]
consists of a pair of abelian groups $M_e$ and $M_{ee}$ together with homomorphisms $H$ and $P$ that satisfy $PHP = 2P$ and $HPH = 2H$.

A morphism $f \colon M \to N$ of quadratic modules consists of a pair of homomorphisms $f \colon M_e \to N_e$ and $f \colon M_{ee} \to N_{ee}$ which commute with $H$ and $P$ respectively.

For any quadratic module $M$, one has the involution
\[
T \dfn HP-1 \colon M_{ee} \to M_{ee}
\]
which satisfies $PT = P$, $TH = H$, and $TT = 1$.
\end{definition}

Note that in \cite[Definition 2.1]{Baues94}, quadratic modules are called quadratic $\Z$-modules, because more general ground rings besides $\Z$ are considered.

\begin{example}
\cite[After Remark 9.2]{Baues94} Consider 
\[
\pi_m \{ S^n \} = \left( \pi_m S^n \ral{H} \pi_m S^{2n-1} \ral{P} \pi_m S^n \right)
\]
where $H$ is the Hopf invariant and $P = [\io_n, \io_n]_*$ is induced by the Whitehead square. This data $\pi_m \{ S^n \}$ is a quadratic module. In particular, we have
\begin{align*}
&\pi_3 \{ S^2 \} = \left( \pi_3 S^2 \ral{H} \pi_3 S^3 \ral{P} \pi_3 S^2 \right) = \left( \Z \ral{1} \Z \ral{2} \Z \right) \\
&\pi_5 \{ S^3 \} = \left( \pi_5 S^3 \ral{H} \pi_5 S^5 \ral{P} \pi_5 S^3 \right) = \left( \Z/2 \ral{0} \Z \ral{0} \Z/2 \right).
\end{align*}
\end{example}

\begin{definition}
\cite[Definition 4.1]{Baues94} Given an abelian group $A$ and a quadratic module $M$, their \Def{quadratic tensor product} $A \ot^{q} M$ is the abelian group generated by symbols
\begin{align*}
&a \ot m, \quad a \in A, m \in M_e \\
&[a,b] \ot n, \quad a,b \in A, n \in M_{ee}
\end{align*}
subject to the relations
\begin{align*}
&(a+b) \ot m = a \ot m + b \ot m + [a,b] \ot H(m) \\
&a \ot (m+m') = a \ot m + a \ot m' \\
&[a,a] \ot n = a \ot P(n) \\
&[a,b] \ot n = [b,a] \ot T(n) \\
&[a,b] \ot n \text{ is linear in each variable } a, b, \text{ and } n.
\end{align*}
\end{definition}

\begin{example} \label{WhitQuad}
\cite[Proposition 4.5]{Baues94} Taking the quadratic module
\[
\Z^{\Ga} \dfn \left( \Z \ral{1} \Z \ral{2} \Z \right) \simeq \pi_3 \{ S^2 \},
\]
the quadratic tensor product with any abelian group $A$ is $A \ot^q \Z^{\Ga} \cong \Ga(A)$, Whitehead's universal quadratic functor $\Ga \colon \Ab \to \Ab$ described in \cite{Whitehead50} \cite[\S 2.1]{Baues05}.
\end{example}

Note that the usual tensor product with a given abelian group $M$ defines an additive functor $- \ot M \colon \Ab \to \Ab$. Similarly, the quadratic tensor product with a fixed quadratic module $M$ defines a quadratic functor $- \ot^q M \colon \Ab \to \Ab$ in the following sense.

\begin{definition}
\cite[\S 2]{Baues05} Let $F \colon \Ab \to \Ab$ be a functor satisfying $F(0) = 0$. Recall that $F$ is \Def{additive} or \Def{linear} if the natural projection
\[
F(X \op Y) \to F(X) \op F(Y)
\]
is an isomorphism.

We say that $F$ is \Def{quadratic} if the \Def{second cross effect}
\[
F(X|Y) \dfn \ker \left( F(X \op Y) \to F(X) \op F(Y) \right)
\]
viewed as a bifunctor is linear in both $X$ and $Y$. In this case, one has a natural decomposition
\[
F(X \op Y) \cong F(X) \op F(Y) \op F(X|Y).
\] 
\end{definition}

Proposition \ref{GammaAdditive} said that a $2$-stage $\Pi$-algebra is described by indecomposable homotopy operations, for $k \neq n-1$. There is an analogous notion in the metastable case $k=n-1$.

\begin{definition}
For $n \geq 2$, the \Def{quadratic module of indecomposables} of $\pi_{2n-1} \{ S^n \}$ is the quotient quadratic module
\[
Q_{n-1} \{ S^n \} \dfn \left( Q_{n-1,n} \ral{H} \pi_{2n-1} S^{2n-1} \ral{P} Q_{n-1,n} \right)
\]
using the notation of \ref{Indecomp}. This is well defined since $H \colon \pi_{2n-1} S^n \to \pi_{2n-1} S^{2n-1} \cong \Z$ vanishes on decomposable elements, namely compositions, since these are torsion elements.
\end{definition}

\begin{proposition} \label{GammaQuadratic}
In the metastable case $k=n-1$, the functor $\tild{\Ga}_n^{n-1}$ is the quadratic functor given by
\[
\tild{\Ga}_n^{n-1}(A_n) = A_n \ot^q Q_{n-1} \{ S^n \}.
\]
\end{proposition}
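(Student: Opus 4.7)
The plan is to follow the blueprint of Proposition \ref{GammaAdditive}, but to track the additional quadratic contribution that appears in the metastable range, where Whitehead squares $[\io_n,\io_n]$ are no longer forced to decompose through an intermediate-dimensional sphere. By Proposition \ref{Gamma2}, the group $\tild{\Ga}_n^{n-1}(A_n)$ is $\pi_{2n-1}$ of the free $\Pi$-algebra $L(A_n,0,\ldots,0)$ generated in degree $n$ by $A_n$ subject to vanishing intermediate degrees.

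For a free abelian group $A_n=\Z^S$, this $\Pi$-algebra is realized through degree $2n-2$ by $\We_{s\in S} S^n$, and its degree $2n-1$ piece is obtained from $\pi_{2n-1}(\We_{s\in S} S^n)$ by quotienting out the subgroup of elements that factor through a wedge of intermediate-dimensional spheres $S^m$ with $n<m<2n-1$. The Hilton-Milnor theorem in the metastable range gives a natural splitting
\[
\pi_{2n-1}\Bigl(\We_{s\in S} S^n\Bigr) \;\cong\; \Op_{s\in S} \pi_{2n-1}(S^n) \;\op\; \Op_{s<t} \pi_{2n-1}(S^{2n-1}),
\]
where the second summand is spanned by the basic Whitehead products $[\io_s,\io_t]$ of distinct wedge inclusions. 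The wedge-summand part becomes $\Op_{s\in S} Q_{n-1,n}$ modulo decomposables through intermediate degrees, while the Whitehead-product summands survive untouched since they do not factor through any single intermediate $S^m$.

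Next, I would construct a natural transformation
\[
\Phi_{A_n}\colon A_n\ot^{q} Q_{n-1}\{S^n\} \to \tild{\Ga}_n^{n-1}(A_n)
\]
by sending $a\ot m$ to the precomposition operation $a\circ m$ and $[a,b]\ot w$ to the composite $w\circ [a,b]$. The defining relations of the quadratic tensor product are verified using classical identities in the homotopy groups of spheres: the key relation $(a+b)\ot m = a\ot m + b\ot m + [a,b]\ot H(m)$ is the Barcus-Barratt/Hilton distributivity law for precomposition, expressed via the Hopf invariant $H$; the relation $[a,a]\ot w = a\ot P(w)$ reflects the identity $w\circ [\io_n,\io_n]=P(w)$ for the Whitehead-square map; and the symmetry $[a,b]\ot w=[b,a]\ot T(w)$ with $T=HP-1$ encodes the graded commutativity of the Whitehead product together with the effect of the swap map on $\pi_{2n-1}(S^{2n-1})$.

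For free abelian $A_n$ the Hilton-Milnor decomposition shows directly that $\Phi_{A_n}$ is an isomorphism. To pass to arbitrary $A_n$, I would invoke right-exactness in the first variable of $-\ot^{q} M$, established in \cite[\S 4]{Baues94}, together with the observation that $\tild{\Ga}_n^{n-1}$ is a composite of the left adjoint $L$ with evaluation at $S^{2n-1}$ and is therefore also right exact in $A_n$; the five lemma applied to a free presentation of $A_n$ concludes. Quadraticity in the sense of the definition above then follows from the formula, since the second cross-effect of $-\ot^{q} M$ is naturally $-\ot -\ot M_{ee}$, which is bilinear. The step I expect to require the most care is the well-definedness of $\Phi$: matching the $H$-$P$ interplay in the abstract quadratic module $Q_{n-1}\{S^n\}$ with the geometric interplay between Hopf invariants and Whitehead products on spheres, with the correct signs throughout. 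The remaining ingredients are formal Hilton-Milnor combinatorics and a standard right-exactness argument.
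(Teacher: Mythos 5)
Your handling of the free case is essentially the paper's route: the paper simply cites \cite[Corollary 9.4]{Baues94} for the isomorphism $\pi_{2n-1}(\vee_l S^n) \cong \pi_n(\vee_l S^n) \ot^q \pi_{2n-1}\{S^n\}$ and then mods out decomposables exactly as in the proof of Proposition \ref{GammaAdditive}, whereas you propose to re-derive that isomorphism from Hilton--Milnor and to verify the quadratic tensor product relations ($H$--$P$ interplay, symmetry via $T=HP-1$, left-distributivity corrections) by hand. That is legitimate, but note that this verification \emph{is} the content of the cited result, and the sign and relation bookkeeping you flag as ``the step requiring the most care'' is the bulk of the work; as an outline it is acceptable, but it is not a shortcut.

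The genuine gap is in the passage from free $A_n$ to arbitrary $A_n$. You invoke ``right-exactness in the first variable of $-\ot^q M$'' and conclude by the five lemma applied to a two-term free presentation. This fails: $-\ot^q M$ is quadratic and non-additive, and it does not preserve cokernels. Concretely, for $M = \Z^{\Ga}$ one has $A \ot^q \Z^{\Ga} \cong \Ga(A)$, and for the presentation $\Z^2 \ral{2} \Z^2 \to (\Z/2)^2 \to 0$ the map $\Ga(2 \cdot \id)$ is multiplication by $4$ on $\Ga(\Z^2) \cong \Z^3$, with cokernel $(\Z/4)^3$ of order $64$, while $\Ga(\Z/2 \op \Z/2) \cong \Z/4 \op \Z/4 \op \Z/2$ has order $32$; so the naive right-exact sequence is not exact. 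The paper states this explicitly: $-\ot^q Q_{n-1}\{S^n\}$ ``is not additive and does not preserve cokernels in general, but it does preserve reflexive coequalizers.'' The correct descent argument, as in Step 3 of the proof of Proposition \ref{GammaAdditive}, replaces the free presentation by the reflexive coequalizer diagram $R \op F \rightrightarrows F \surj A_n$ and uses that both $-\ot^q Q_{n-1}\{S^n\}$ and $\tild{\Ga}_n^{n-1}$ preserve reflexive coequalizers; the cross-effect contributions that your five-lemma argument silently discards are exactly what the second arrow $(0,1)$ of the coequalizer pair records. With that substitution (and with the free-case computation either cited or fully verified), your outline matches the paper's proof.
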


\begin{proof}
See Section \ref{sec:Proofs}.
\end{proof}

\begin{example}
In the case $n=2$ and $k=1$, we have
\[
\pi_3 \{ S^2 \} \stackrel{=}{\surj} Q_1 \{ S^2 \} \cong \left( \Z \ral{1} \Z \ral{2} \Z \right) = \Z^{\Ga}.
\]
As noted in Example \ref{WhitQuad}, the quadratic tensor product with this quadratic module is
\[
A_2 \ot^q \Z^{\Ga} \cong \Ga(A_2)
\]
which recovers the case $n=2$ of Example \ref{k1}.
\end{example}

\begin{example}
In the case $n=3$ and $k=2$, we have
\[
\pi_5 \{ S^3 \} \cong \left( \Z/2 \ral{0} \Z \ral{0} \Z/2 \right).
\]
where the group $\pi_5 S^3 \cong \Z/2$ is generated by the composite $S^5 \ral{\eta} S^4 \ral{\eta} S^3$. Therefore the quadratic module of indecomposables is
\[
Q_2 \{ S^3 \} \cong \left( 0 \to \Z \to 0 \right) = \Z^{\La}
\]
using the notation of \cite[Lemma 2.11]{Baues94}. By \cite[Proposition 4.5]{Baues94}, the quadratic tensor product with this quadratic module is the exterior square functor
\[
A_3 \ot^q \Z^{\La} \cong \La^2(A_3)
\]
which recovers the case $n=3$ of Example \ref{k2}.
\end{example}

\section{Criterion for realizability} \label{sec:Criterion}

First recall some notions and notation from \cite[\S 1,2]{Baues00}. Let $X$ be an $(n-1)$-connected CW-complex, whose homotopy $\Pi$-algebra is given inductively by the abelian group $\pi_n \dfn \pi_n X$ and maps of abelian groups
\begin{align*}
&\eta_1 \colon \Ga_n^1(\pi_n) \to \pi_{n+1} \\
&\eta_2 \colon \Ga_n^2(\eta_1) \to \pi_{n+2} \\
&\ldots \\
&\eta_k \colon \Ga_n^k(\eta_1, \eta_2, \ldots, \eta_{k-1}) \to \pi_{n+k} \\
&\ldots
\end{align*}
Note that $\eta_k$ encodes the $(n+k)$-type of $\pi_* X$.

Consider Whitehead's ``certain exact sequence'' \cite{Whitehead50}
\begin{equation} \label{CertainExSeq}
\ldots \to H_{j+1} X \ral{b} \Ga_j X \ral{i} \pi_j X \ral{h} H_j X \ral{b} \Ga_{j-1} X \to \ldots
\end{equation}
where $h$ is the Hurewicz map. There is a natural transformation $\ga$ making the diagram
\begin{equation} \label{TransformationGamma}
\xymatrix{
\Ga_n^k(\eta_1, \eta_2, \ldots, \eta_{k-1}) \ar[d]_{\ga_X} \ar[dr]^{\eta_k} & \\
\Ga_{n+k} X \ar[r]^i & \pi_{n+k} X \\
}
\end{equation}
commute. In \cite[Theorem 2.4]{Baues00}, $\ga$ is exhibited as the left edge morphism of a spectral sequence
\[
E^2_{p,q} = (L_p \Ga_n^q)(\eta_1, \eta_2, \ldots, \eta_{q-1}) \Ra \Ga_{n+p+q} X.
\]

\begin{lemma} \label{IsoGamma}
Postnikov truncation $X \to P_n X$ induces isomorphisms $\Ga_j X \ral{\cong} \Ga_j P_n X$ for $j \leq n+1$.
\end{lemma}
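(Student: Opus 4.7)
My approach exploits the skeletal nature of Whitehead's $\Ga_j$ functor: for any CW complex $Y$ one has $\Ga_j Y = \mathrm{image}\bigl(\pi_j Y^{j-1} \to \pi_j Y^j\bigr)$, so $\Ga_j Y$ depends only on the inclusion of the $(j-1)$-skeleton into the $j$-skeleton.

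The plan is to first choose a CW model for the Postnikov section $P_n X$ that contains $X$ as a subcomplex and in which every cell of $P_n X \setminus X$ has dimension at least $n+2$. Such a model is built in the standard way: attach $(n+2)$-cells to $X$ along maps representing a set of generators of $\pi_{n+1} X$ to kill $\pi_{n+1}$, then attach $(n+3)$-cells to kill the resulting $\pi_{n+2}$, and continue inductively. Since $X$ is already $(n-1)$-connected and no new cells of dimension $\le n+1$ are introduced, this CW model satisfies $(P_n X)^{j} = X^{j}$ for every $j \le n+1$.

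For such $j$, the inclusion $X \hookrightarrow P_n X$ then restricts to the identity on both the $(j-1)$- and $j$-skeleta, so the induced homomorphism $\Ga_j X \to \Ga_j P_n X$ is literally the identity of $\mathrm{image}\bigl(\pi_j X^{j-1} \to \pi_j X^j\bigr)$, hence an isomorphism. A small bit of sanity-checking handles the low cases: for $j \le n$ one recovers $\Ga_j X = 0 = \Ga_j P_n X$ from the $(n-1)$-connectivity of both spaces (take $X^{n-1}=\ast$), so the genuinely informative case is $j = n+1$.

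I do not expect a substantial obstacle beyond invoking the classical fact that $\Ga_j$, so defined, is a well-defined homotopy invariant independent of the chosen CW model. An alternative route would apply the five-lemma to the naturality square of the certain exact sequence \eqref{CertainExSeq}: for $j \le n$ both $\Ga$ terms vanish by connectivity, while for $j = n+1$ one would then have to compare $H_{n+2}$ and $\pi_{n+2}$ of $X$ and $P_n X$ using the fiber sequence defining $P_n X$. The cellular argument above avoids this auxiliary homological bookkeeping, so it is what I would write up.
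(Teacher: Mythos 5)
Your proposal is correct and follows essentially the same route as the paper's proof: build $P_n X$ from $X$ by attaching only cells of dimension at least $n+2$, observe that the skeleta $X^{(j)} = (P_n X)^{(j)}$ agree for $j \leq n+1$, and conclude via the skeletal description $\Ga_j Y = \im\bigl(\pi_j Y^{(j-1)} \to \pi_j Y^{(j)}\bigr)$. No changes are needed.
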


\begin{proof}
The truncation map $X \to P_n X$ can be chosen as a direct limit of maps $X = X_0 \to X_1 \to X_2 \to \ldots$ which are cell attachments, where $X_j \to X_{j+1}$ is attaching cells of dimension at least $n+j+2$ (in order to kill $\pi_{n+j+1}$). In particular, only cells of dimension at least $n+2$ are involved, so that with this particular cell structure, the skeleta $X^{(n+1)} = (P_n X)^{(n+1)}$ agree.

Since $\Ga_j X$ can be defined as $\Ga_j X = \im \left( \pi_j X^{(j-1)} \to \pi_j X^{(j)} \right)$ induced by skeletal inclusion, the result follows.
\end{proof}

\begin{theorem}[Criterion for realizability] \label{Criterion}
The $2$-stage $\Pi$-algebra $\ul{A}$ corresponding to
\[
\eta_k \colon \tild{\Ga}_n^k (A_n) \to A_{n+k}
\]
is realizable if and only if the map $\eta_k$ factors through the map $\ga_{K(A_n,n)}$ as illustrated in the diagram
\[
\xymatrix{
& \Ga_{n+k} K(A_n,n) \ar@{-->}[d] \\
\tild{\Ga}_n^k (A_n) \ar[ur]^{\ga_{K(A_n,n)}} \ar[r]_{\eta_k} & A_{n+k}. \\
}
\]
Here we have the isomorphism $\Ga_{n+k} K(A_n,n) \cong H_{n+k+1} K(A_n,n)$ by the Whitehead exact sequence \eqref{CertainExSeq}. The homology of Eilenberg-MacLane spaces is well known \cite{Eilenberg53} \cite{Eilenberg54} \cite{Eilenberg54III} \cite{Cartan56}.
\end{theorem}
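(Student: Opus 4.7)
The plan is to prove both implications using Postnikov theory and naturality of the transformation $\ga$. The key preliminary is a direct generalization of Lemma \ref{IsoGamma} to the situation at hand: if $X$ realizes the two-stage $\Pi$-algebra $\ul{A}$, then the Postnikov truncation $X \to P_{n+k-1} X = K(A_n, n)$ can be obtained by attaching cells of dimension $\geq n+k+1$, since only $\pi_{n+k}$ and higher need be killed. Hence the $(n+k)$-skeleta of $X$ and $K(A_n,n)$ agree, and since $\Ga_j X = \im(\pi_j X^{(j-1)} \to \pi_j X^{(j)})$ depends only on the $j$-skeleton, the induced map $\Ga_{n+k} X \to \Ga_{n+k} K(A_n,n)$ is an isomorphism.

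For necessity, suppose $X$ realizes $\ul{A}$. Naturality of $\ga$ applied to the truncation $X \to K(A_n, n)$ yields $\ga_{K(A_n,n)} = \Ga_{n+k}(X \to K(A_n,n)) \circ \ga_X$. Combined with the skeletal isomorphism above and the commutativity of diagram \eqref{TransformationGamma}, we obtain $\eta_k = i \circ \ga_{K(A_n,n)}$ after transporting $i \colon \Ga_{n+k} X \to \pi_{n+k} X = A_{n+k}$ across the identification. This gives the desired factorization with the dashed arrow being $i$.

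For sufficiency, assume $\eta_k = \phi \circ \ga_{K(A_n,n)}$ for some $\phi \colon \Ga_{n+k} K(A_n,n) \to A_{n+k}$. Regard $\phi$ as a homomorphism $\psi \colon H_{n+k+1} K(A_n,n) \to A_{n+k}$ through the Whitehead-sequence isomorphism, and lift $\psi$ via the universal coefficient theorem to a cohomology class $[\kappa] \in H^{n+k+1}(K(A_n,n); A_{n+k})$, represented by a map $\kappa \colon K(A_n,n) \to K(A_{n+k}, n+k+1)$. Define $X$ as the homotopy fiber of $\kappa$. Then $\pi_* X$ is concentrated in degrees $n$ and $n+k$ with values $A_n$ and $A_{n+k}$, and it remains to identify $\eta_k^X$ with the prescribed $\eta_k$.

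The main obstacle is this last step: verifying that under the skeletal isomorphism $\Ga_{n+k} X \cong \Ga_{n+k} K(A_n,n)$, the Whitehead map $i \colon \Ga_{n+k} X \to \pi_{n+k} X$ agrees with $\phi$. This is a statement about how the $k$-invariant of a two-stage Postnikov fibration governs the primary homotopy operation $\eta_k^X$. I would prove it by comparing the Whitehead exact sequences \eqref{CertainExSeq} of $X$ and $K(A_n,n)$ along the truncation map, where the connecting behaviour is controlled by $\kappa$ via the fibration $K(A_{n+k}, n+k) \to X \to K(A_n, n)$, and by invoking \cite[Theorem 3.3 (B)]{Baues00} together with the edge-morphism description of $\ga$ from \cite[Theorem 2.4]{Baues00}. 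Combined with commutativity of \eqref{TransformationGamma} applied to $X$ itself, this yields $\eta_k^X = i \circ \ga_X = \phi \circ \ga_{K(A_n,n)} = \eta_k$, so $X$ realizes $\ul{A}$, completing the proof.
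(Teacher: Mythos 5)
Your necessity direction coincides with the paper's: Lemma \ref{IsoGamma} applied to the truncation $X \to P_{n+k-1}X \cong K(A_n,n)$ gives $\Ga_{n+k}X \cong \Ga_{n+k}K(A_n,n)$, and naturality of $\ga$ plus diagram \eqref{TransformationGamma} produces the factorization. For sufficiency, however, you take a genuinely different route. The paper builds the realization by feeding an explicitly constructed exact sequence (with $H_1 \surj \ker f$ free and $H_0 = \coker f \op \Ga_{n+k-1}K(A_n,n)$) into the theorem on realizability of the Hurewicz morphism \cite[Theorem 3.4.7]{Baues96}, then uses naturality of $\ga$ to check the resulting space has the prescribed operations. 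You instead lift $\phi$ through the universal coefficient surjection $\te$ to a $k$-invariant $\ka$ and take the homotopy fiber of $\ka \colon K(A_n,n) \to K(A_{n+k},n+k+1)$. This is precisely the ``alternate proof'' the paper itself sketches in its subsection on $k$-invariants, and it is valid; its advantage is that the candidate space is produced in one step, at the cost of requiring the identification $\eta_k^X = \te(\ka) \circ \ga_{K(A_n,n)}$ for a $2$-stage space with $k$-invariant $\ka$ --- exactly the ``main obstacle'' you isolate. That identification is a known theorem, \cite[Theorem 2.5.10 (b)]{Baues96}, which is the reference you actually want here; your citation of \cite[Theorem 3.3 (B)]{Baues00} is off target, since that result concerns the specific obstruction $O(\ul{A}) = \eta_2 \circ E_3(\eta_1)$ in low stems rather than the general relation between $k$-invariants and the Whitehead map $i$. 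Your sketch of how to establish the identification (comparing Whitehead sequences along the fibration and using the edge-morphism description of $\ga$) is plausible but not carried out; if you invoke \cite[Theorem 2.5.10 (b)]{Baues96} directly, as the paper does, the argument closes cleanly. Both proofs ultimately rest on comparable input from \cite{Baues96}, so neither is more elementary; yours is arguably more conceptual, the paper's more self-contained in its verification that the constructed space has the right $\Pi$-algebra structure.
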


\begin{proof}
($\Ra$) If $\ul{A}$ is realizable by a space $X$, then the natural transformation $\ga$ for $X$ yields a commutative diagram
\[
\xymatrix{
**[l] \Ga_n^k(A_n, 0, \ldots, 0) = \tild{\Ga}_n^k(A_n) \ar[d]_{\ga_X} \ar[dr]^{\eta_k} & \\
\Ga_{n+k} X \ar[r]^i & **[r]\pi_{n+k} X = A_{n+k} \\
}
\]
as noted in (\ref{TransformationGamma}). Because $X$ has $(n+k-1)$-type $P_{n+k-1} X \cong K(A_n,n)$, Lemma \ref{IsoGamma} provides a natural isomorphism
\[
\Ga_{n+k} X \cong \Ga_{n+k} (P_{n+k-1} X) \cong \Ga_{n+k} K(A_n,n)
\]
and therefore the desired factorization.

($\Leftarrow$) We will use the theorem on the realizability of the Hurewicz morphism \cite[Theorem 3.4.7]{Baues96}, starting from the $(n+k-1)$-Postnikov section of a putative realization, which is $K(A_n,n)$. Note that for $k \geq 2$, the map
\[
i_{n+k-1} \colon \Ga_{n+k-1} K(A_n,n) \to \pi_{n+k-1} K(A_n,n) = 0
\]
in Whitehead's exact sequence is null, that is, $\ker i_{n+k-1} = \Ga_{n+k-1} K(A_n,n)$. In the case $k=1$, the argument below will work anyway, using $\ker i_{n+k-1}$ instead of $\Ga_{n+k-1} K(A_n,n)$.

We are given a factorization $\eta_k = f \circ \ga_{K(A_n,n)}$, with $f \colon \Ga_{n+k} K(A_n,n) \to A_{n+k}$. Choose an epimorphism $b_1 \colon H_1 \surj \ker f$ where $H_1$ is a free abelian group. Now take $H_0 \dfn \coker f \op \Ga_{n+k-1} K(A_n,n)$ with the map $A_{n+k} \to H_0$ surjecting onto the first summand and $b_0 \colon H_0 \surj \Ga_{n+k-1} K(A_n,n)$ the projection. These maps assemble into the exact sequence
\[
H_1 \ral{b_1} \Ga_{n+k} K(A_n,n) \ral{f} A_{n+k} \to H_0 \surj \Ga_{n+k-1} K(A_n,n) \to 0. 
\]
By \cite[Theorem 3.4.7]{Baues96}, there exists a CW-complex $X$ together with a map $p \colon X \to K(A_n,n)$ inducing isomorphisms on homotopy groups $\pi_i$ for $i \leq n+k-1$ and making the diagram
\[
\resizebox{\textwidth}{!}{
\xymatrix{
H_{n+k+1} X \ar[d]_{\simeq} \ar[r] & \Ga_{n+k} X \ar[d]_{\simeq}^{p_*} \ar[r] & \pi_{n+k} X \ar[d]_{\simeq} \ar[r] & H_{n+k} X \ar[d]_{\simeq} \ar@{->>}[r] & \Ga_{n+k-1} X \ar[d]_{\simeq}^{p_*} \ar[r] & 0 \\
H_1 \ar[r]^{b_1} & \Ga_{n+k} K(A_n,n) \ar[r]^-f & A_{n+k} \ar[r] & H_0 \ar@{->>}[r] & \Ga_{n+k-1} K(A_n,n) \ar[r] & 0 \\
}
}
\]
commute, where the top row is part of Whitehead's exact sequence for $X$. By naturality of $\ga$, the diagram
\[
\xymatrix{
\tild{\Ga}_n^k(A_n) \ar@{=}[d] \ar[r]^-{\ga_X} \ar@/^1.5pc/[rr]^{\eta_k^X} & \Ga_{n+k} X \ar[d]_{\cong}^{p_*} \ar[r]^-{i_{n+k}} & \pi_{n+k} X \ar[d]_{\cong} \\
\tild{\Ga}_n^k(A_n) \ar[r]^-{\ga_{K(A_n,n)}} \ar@/_1.5pc/[rr]_{\eta_k} & \Ga_{n+k} K(A_n,n) \ar[r]^-{i_{n+k}} & A_{n+k} \\
}
\]
commutes, so that $X$ has the prescribed $\Pi$-algebra structure up to degree $n+k$. Hence the Postnikov section $P_{n+k} X$ is a realization of $\ul{A}$.
\end{proof}

\begin{corollary} \label{SplitInj}
Fix $n \geq 2$ and $k \geq 1$. Then an abelian group $A_n$ has the property that ``every $\Pi$-algebra concentrated in degrees $n$ and $n+k$ with prescribed group $A_n$ is realizable'' if and only if the map
\[
\ga_{K(A_n,n)} \colon \tild{\Ga}_n^k (A_n) \to \Ga_{n+k} K(A_n,n)
\]
is split injective.
\end{corollary}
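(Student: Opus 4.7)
The plan is to reduce the statement to a purely formal observation about retractions of maps of abelian groups, using Theorem \ref{Criterion} as the only nontrivial input. Write $\ga \dfn \ga_{K(A_n,n)} \colon \tild{\Ga}_n^k(A_n) \to \Ga_{n+k} K(A_n,n)$. By the Criterion, a $\Pi$-algebra $\ul{A}$ concentrated in degrees $n$ and $n+k$ with prescribed bottom group $A_n$ and structure map $\eta_k \colon \tild{\Ga}_n^k(A_n) \to A_{n+k}$ is realizable if and only if $\eta_k$ factors as $f \circ \ga$ for some $f \colon \Ga_{n+k} K(A_n,n) \to A_{n+k}$. Hence the assertion reduces to: every morphism out of $\tild{\Ga}_n^k(A_n)$ factors through $\ga$ if and only if $\ga$ is split injective in $\Ab$.

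For the ($\Leftarrow$) implication, suppose $r \colon \Ga_{n+k} K(A_n,n) \to \tild{\Ga}_n^k(A_n)$ is a retraction of $\ga$, i.e. $r \circ \ga = \id$. Given any $\ul{A}$ with structure map $\eta_k \colon \tild{\Ga}_n^k(A_n) \to A_{n+k}$, set $f \dfn \eta_k \circ r$; then $f \circ \ga = \eta_k \circ r \circ \ga = \eta_k$, so by Theorem \ref{Criterion} the $\Pi$-algebra $\ul{A}$ is realizable.

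For the ($\Ra$) implication, apply the hypothesis to the universal choice of structure map, namely $A_{n+k} \dfn \tild{\Ga}_n^k(A_n)$ and $\eta_k \dfn \id_{\tild{\Ga}_n^k(A_n)}$. The corresponding $\Pi$-algebra is realizable by assumption, so Theorem \ref{Criterion} yields a map $f \colon \Ga_{n+k} K(A_n,n) \to \tild{\Ga}_n^k(A_n)$ with $f \circ \ga = \eta_k = \id$. Thus $f$ is a retraction of $\ga$ and $\ga$ is split injective, as required.

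There is no serious obstacle in this argument; the only point worth emphasizing is the choice of the universal test case $\eta_k = \id$ in the forward direction, which immediately forces the existence of a retraction. Everything else is formal bookkeeping on top of Theorem \ref{Criterion}.
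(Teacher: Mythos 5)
Your proof is correct and follows essentially the same route as the paper: the forward direction uses the universal test case $\eta_k = \id_{\tild{\Ga}_n^k(A_n)}$ (the paper phrases this as the contrapositive), and the backward direction composes $\eta_k$ with a retraction of $\ga_{K(A_n,n)}$, which is just the retraction form of the paper's direct-sum splitting argument.
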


\begin{proof}
($\Ra$) If $\ga_{K(A_n,n)}$ is \emph{not} split injective, then pick $A_{n+k} \dfn \tild{\Ga}_n^k (A_n)$ with the structure map
\[
\eta_k \dfn \id \colon \tild{\Ga}_n^k (A_n) \to \tild{\Ga}_n^k (A_n)
\]
which does not factor through $\ga_{K(A_n,n)}$, and thus defines a non-realizable $\Pi$-algebra.

($\Leftarrow$) If $\ga_{K(A_n,n)}$ is split injective, then a factorization
\[
\xymatrix{
& \Ga_{n+k} K(A_n,n) \simeq \tild{\Ga}_n^k (A_n) \op C \ar@{-->}[d]^f \\
\tild{\Ga}_n^k (A_n) \ar@{^{(}->}[ur]^{\ga_{K(A_n,n)}} \ar[r]_{\eta_k} & A_{n+k} \\
}
\]
can always be found, taking $f$ to be $\eta_k$ on the summand $\tild{\Ga}_n^k (A_n)$ and an arbitrary map on the complementary summand $C$.
\end{proof}

\begin{remark} \label{TopDetect}
As a particular case of Corollary \ref{SplitInj}, whenever $\ga$ is not injective, one can find a corresponding non-realizable $2$-stage $\Pi$-algebra. Here is another way of thinking about this.

Say that a homotopy operation $\al \in \pi_{n+k} S^n$ can be detected by a space $X$ if there is an $x \in \pi_n X$ satisfying $\al^* x \neq 0 \in \pi_{n+k} X$. Using \ref{GammaAdditive}, Theorem \ref{Criterion} says that a homotopy operation $\al \in Q_{k,n}$ can be detected by a $2$-stage space if and only if it satisfies $\ga_{K(\Z,n)}(\al) \neq 0$. Indeed, one has the realizable $2$-stage $\Pi$-algebra $\ul{A}$ with $A_n = \Z$, $A_{n+k} = \Ga_{n+k} K(\Z,n)$, and $\ga_{K(\Z,n)} \colon Q_{k,n} \to \Ga_{n+k} K(\Z,n)$ as structure map.
\end{remark}

\begin{remark}
In principle, the obstruction to realizability exhibited in \ref{Criterion} could be interpreted in terms of an obstruction class in Andr\'e-Quillen cohomology of the $\Pi$-algebra $\ul{A}$ \cite{Blanc04} \cite{Frankland11}, or equivalently, in terms of higher homotopy operations \cite{Blanc12}.
\end{remark}

\subsection*{Relationship to $k$-invariants}

It is a classic fact that connected spaces are classified up to homotopy by their $k$-invariants. In particular, a $2$-stage space $X$ with homotopy groups $\pi_n \dfn \pi_n X$ and $\pi_{n+k} \dfn \pi_{n+k} X$ (where $n \geq 2$) is classified by its $k$-invariant
\[
\ka \in H^{n+k+1} \left( K(\pi_n, n); \pi_{n+k} \right).
\]
Via the natural surjective map
\[
\te \colon H^{n+k+1} \left( K(\pi_n, n); \pi_{n+k} \right) \surj \Hom_{\Z} \left( H_{n+k+1}(K(\pi_n, n),\Z), \pi_{n+k} \right)
\]
this yields a map of abelian groups
\[
\Ga_{n+k} K(\pi_n, n) \cong H_{n+k+1}(K(\pi_n, n),\Z) \ral{\te(\ka)} \pi_{n+k}.
\]
Another point of view on Theorem \ref{Criterion}, as well as an alternate proof, is that the $\Pi$-algebra $\pi_* X$ is given by the structure map
\[
\xymatrix{
\tild{\Ga}_n^k(\pi_n) \ar[r]^-{\ga_{K(\pi_n,n)}} \ar@/_1.5pc/[rr]_{\eta_k} & \Ga_{n+k} K(\pi_n,n) \ar[r]^-{\te(\ka)} & \pi_{n+k}. \\
}
\]
This follows from the theorem on $k$-invariants in \cite[Theorem 2.5.10 (b)]{Baues96} and diagram \eqref{TransformationGamma}. Therefore, the realizable $2$-stage $\Pi$-algebras are precisely those whose structure map $\eta_k$ factors through $\ga_{K(\pi_n,n)}$.

\section{Stable case} \label{sec:Stable}

A $\Pi$-algebra concentrated in a stable range $n, n+1, \ldots, n+k$ with $k \leq n-2$ can be identified with a module over the stable homotopy ring $\pi_*^S$, or more precisely its Postnikov truncation $\pi_{* \leq k}^S$. Indeed, in such a $\Pi$-algebra $\ul{A}$, all Whitehead products vanish for dimension reasons, and all precomposition operations $\al^* \colon A_{n+i} \to A_{n+j}$ are induced by maps $\al \colon S^{n+j} \to S^{n+i}$ that live in stable homotopy groups $\pi_{j-i}^S$. The identification is made more precise in \ref{StableRange}.

\begin{proposition}
A $\Pi$-algebra concentrated in a stable range $n, n+1, \ldots, n+k$ is realizable (by a space) if and only if the corresponding $\pi_*^S$-module is realizable (by a spectrum).
\end{proposition}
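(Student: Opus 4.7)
The plan is to pass between spaces and spectra via the $\Si^\infty \dashv \Om^\infty$ adjunction, invoking the Freudenthal suspension theorem to ensure that nothing is lost in the stable range $k \leq n-2$. Throughout, we use the identification of stably-concentrated $\Pi$-algebras with $\pi_*^S$-modules from \ref{StableRange}: when $\ul{A}$ is concentrated in degrees $[n, n+k]$ with $k \leq n-2$, all Whitehead products on $\ul{A}$ vanish for dimension reasons (since $p+q-1 \geq 2n-1 > n+k$ whenever $p,q \geq n$), and each precomposition operation $\al^* \colon A_{n+i} \to A_{n+j}$ is indexed by a class $\al \in \pi_{n+j}(S^{n+i})$, which by Freudenthal equals the stable group $\pi_{j-i}^S$ since $j \leq k \leq n-2 \leq n + 2i - 2$. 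Thus the $\Pi$-algebra structure and the $\pi_*^S$-module structure encode the same data on $(A_n, \ldots, A_{n+k})$.

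For the direction ($\Leftarrow$), suppose a spectrum $E$ satisfies $\pi_* E \cong \ul{A}$ as $\pi_*^S$-modules; we may assume $E$ is already $(n-1)$-connected and $(n+k)$-truncated by Postnikov truncation. Take $X \dfn \Om^\infty E$. Then $\pi_i X = \pi_i E = A_i$ as abelian groups and $X$ is concentrated in degrees $[n,n+k]$. Since $X$ is an infinite loop space, every stable precomposition $\al^* \colon \pi_{n+i} X \to \pi_{n+j} X$ is computed by the $\pi_*^S$-action on $\pi_* E$; combined with the vanishing of Whitehead products noted above, this shows $X$ realizes $\ul{A}$ as a $\Pi$-algebra.

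For the direction ($\Ra$), suppose $X$ realizes $\ul{A}$. We may assume $X$ is $(n-1)$-connected and $(n+k)$-truncated by replacing it with a Postnikov section of its $n$-connective cover. Set $E \dfn P_{n+k} \Si^\infty X$. Freudenthal gives $\pi_i(\Si^\infty X) \cong \pi_i X = A_i$ for $n \leq i \leq 2n-2$, which covers the range $[n, n+k]$; hence $\pi_* E$ has the correct underlying graded abelian group. The $\pi_*^S$-action on $\pi_* E$ is stable precomposition, and by naturality of precomposition under the stabilization map $X \to \Om^\infty \Si^\infty X$ it matches the corresponding precomposition operations on $\pi_* X$, which by the first paragraph exhaust the nontrivial $\Pi$-algebra data of $\ul{A}$.

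The main obstacle is not the existence of the candidate space or spectrum, which the adjunction provides, but the verification that the induced primary operations really coincide on both sides. This reduces to naturality of precomposition with respect to stabilization, together with the vanishing of Whitehead products in the stable range -- both standard consequences of Freudenthal, once the identification in \ref{StableRange} is in hand.
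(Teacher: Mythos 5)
Your proof is correct and follows essentially the same route as the paper: the paper likewise takes $P_{n+k}\Sigma^{\infty}X$ in one direction and $\Omega^{\infty}Z$ in the other, delegating the verification that the operations match to Proposition \ref{LoopsInfty} (the $\Sigma^{\infty}\dashv\Omega^{\infty}$ adjunction) and to the equivalence of Proposition \ref{StableRange}, which encapsulates the Freudenthal/Hilton--Milnor bookkeeping you carry out by hand.
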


\begin{proof}
($\Ra$) Let $\ul{A}$ be a $\Pi$-algebra concentrated in said stable range, and denote also by $A$ the corresponding $\pi_*^S$-module. If $X$ is a space realizing $\ul{A}$, then the Postnikov truncation $P_{n+k} \Si^{\infty} X$ of the suspension spectrum of $X$ is a spectrum realizing $A$.

($\Leftarrow$) Let $M$ be a $\pi_*^S$-module concentrated in a stable range, so that the corresponding $\Pi$-algebra is $\Om^{\infty} M$, by \ref{StableRange}. If $Z$ is a spectrum realizing $M$, then the infinite loop space $\Om^{\infty} Z$ is a space realizing $\Om^{\infty} M$, by \ref{LoopsInfty}.
\end{proof}

\begin{remark}
A $\pi_*^S$-module $M$ is realizable if and only if any of its shifts $\Si^j M$ (for $j \in \Z$) is realizable. This follows from the isomorphism $\pi_* (\Si^j Z) \cong \Si^j (\pi_* Z)$ of $\pi_*^S$-modules.
\end{remark}

The criterion \ref{Criterion} indicates that the map
\[
\ga_{K(A_n,n)} \colon \tild{\Ga}_n^k (A_n) \to \Ga_{n+k} K(A_n,n) \cong H_{n+k+1} K(A_n,n)
\]
plays a key role for determining realizability. In the stable range $k \leq n-2$, we have seen in \ref{GammaAdditive} that the domain of $\ga_{K(A_n,n)}$ is
\[
\tild{\Ga}_n^k (A_n) = A_n \ot Q_k^S
\]
while its codomain is
\[
H_{n+k+1} K(A_n,n) \cong (H \Z)_{k+1} (H A_n) \cong (H A_n)_{k+1} (H \Z) 
\]
where $HA$ denotes the Eilenberg-MacLane spectrum of an abelian group $A$. The universal coefficient theorem yields a natural exact sequence
\[
0 \to A_n \ot H \Z_{k+1} H \Z \inj (H A_n)_{k+1} H \Z \surj \Tor_1^{\Z} (A_n, H \Z_k H \Z) \to 0
\]
which is split (non-naturally).

\begin{lemma} \label{KanExtn}
Let $R$ be a commutative ring, $R\Mod$ the category of $R$-modules, and $\io \colon R\Modff \to R\Mod$ the inclusion of the full subcategory of finitely generated free $R$-modules.

Let $F \colon R\Modff \to R\Mod$ be an additive functor. Then there is a unique (up to unique natural isomorphism) extension $\ol{F} \colon R\Mod \to R\Mod$ of $F$ which preserves all (small) colimits. Moreover, $\ol{F}$ is natural in $F$. It is given by $\ol{F} = - \ot_R FR$. For any functor $G \colon R\Mod \to R\Mod$, there is a natural transformation $\ol{\io^* G} \to G$, which is natural in $G$.
\end{lemma}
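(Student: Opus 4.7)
The plan is to realize $\ol{F}$ as the left Kan extension $\Lan_\io F$, so that the universal property of the Kan extension directly yields both cocontinuity and naturality in $F$; the explicit formula $-\ot_R FR$ then falls out by computing this Kan extension as a coend.

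Concretely, I would set $\ol{F} \dfn -\ot_R FR$, which preserves all small colimits as a left adjoint to $\Hom_R(FR,-)$. To check it extends $F$, I would use additivity of $F$ to produce natural isomorphisms
\[
R^{\op n} \ot_R FR \cong (FR)^{\op n} \cong F(R^{\op n}),
\]
and verify compatibility on morphisms represented by matrices. The one genuine subtlety---and the main obstacle---is that this compatibility on a scalar map $r \colon R \to R$ requires the $R$-module structure on $FR$ coming from $FR$ being an object of $R\Mod$ to agree with the one induced by applying $F$ to scalar multiplication. Over $\Z$ (the case used in the paper) additivity automatically forces $F(n\cdot) = n\cdot$ on $FR$, so the two structures coincide; for general commutative $R$ one effectively needs $F$ to be $R$-linear, a standard hypothesis implicit in the word ``additive'' between $R$-module categories.

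For uniqueness, I would write any $M \in R\Mod$ as the canonical colimit over the comma category $\io \downarrow M$ of finitely generated free modules mapping to $M$: any cocontinuous extension $\ol{F}'$ satisfies $\ol{F}'(M) \cong \mathrm{colim}\, F(R^{\op n}) \cong \ol{F}(M)$, producing the unique natural isomorphism. Naturality in $F$ is immediate from the formula: a natural transformation $\al \colon F \to F'$ has component $\al_R \colon FR \to F'R$, and $-\ot_R \al_R$ is the induced transformation $\ol{F} \to \ol{F'}$. Finally, the natural transformation $\ol{\io^* G} \to G$ is the counit of the adjunction $\Lan_\io \dashv \io^*$; concretely it is the map $M \ot_R G(R) \to G(M)$ sending $m \ot x \mapsto G(\hat m)(x)$, where $\hat m \colon R \to M$ denotes $r \mapsto rm$, and its $R$-bilinearity and naturality in both $M$ and $G$ are routine functoriality checks.
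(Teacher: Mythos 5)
Your proposal is correct and follows exactly the paper's approach: the paper's entire proof is the single sentence that the left Kan extension $\Lan_{\io} F$ satisfies all the stated properties, and you have simply supplied the details (the coend computation giving $-\ot_R FR$, uniqueness via writing a module as a canonical colimit of finitely generated free ones, and the counit of $\Lan_\io \dashv \io^*$). Your observation about the two $R$-module structures on $FR$ needing to agree is a legitimate and careful point that the paper glosses over, and you resolve it correctly for the case $R = \Z$ actually used.
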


\begin{proof}
The left Kan extension $\ol{F} = \Lan_{\io} F$ satisfies all the properties in the statement.
\end{proof}

\begin{remark}
The functor $\ol{\io^*G}$ is \emph{not} the $0^{\text{th}}$ left derived functor $L_0 G$ of $G$, which provides the best approximation of $G$ by a right exact functor, with comparison map $L_0 G \to G$. Indeed, there exist additive right exact functors $\Ab \to \Ab$ which do not preserve infinite direct sums. However, the comparison maps do fit together as $\ol{\io^* G} \to L_0 G \to G$.
\end{remark}

\begin{proposition} \label{TensorSummand}
In the stable range $k \leq n-2$, the map
\[
\ga_{K(A_n,n)} \colon A_n \ot Q_k^S \to (H \Z)_{k+1} (H A_n)
\]
factors through the summand $A_n \ot H \Z_{k+1} H \Z$, that is, we have
\[
\ga_{K(A_n,n)} \colon A_n \ot Q_k^S \to A_n \ot H \Z_{k+1} H \Z \inj (H \Z)_{k+1} (H A_n).
\]
\end{proposition}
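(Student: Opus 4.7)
The plan is to invoke the left Kan extension adjunction supplied by Lemma \ref{KanExtn} in order to reduce the factorization claim to a tautology on $\Modff$.

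Introduce three additive functors $\Ab \to \Ab$:
\[
F(A) \dfn A \ot Q_k^S, \quad G'(A) \dfn A \ot H\Z_{k+1}H\Z, \quad G(A) \dfn (H\Z)_{k+1}(HA),
\]
so that $\ga_{K(A,n)}$ defines a natural transformation $\ga \colon F \to G$. Both $F$ and $G'$ preserve small colimits, being tensor products with fixed abelian groups. The functor $G$ is additive (since $H(A \op B) \simeq HA \vee HB$ implies $G(A \op B) \simeq G(A) \op G(B)$) but fails to preserve colimits because of the Tor summand. The natural split short exact sequence recalled just before Lemma \ref{KanExtn} provides a natural monomorphism $\iota \colon G' \inj G$, and $\iota_A$ is an isomorphism whenever $A$ is finitely generated free, by vanishing of $\Tor_1^{\Z}(A,-)$.

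Apply Lemma \ref{KanExtn} with $R = \Z$. Since $F$ and $G'$ are additive and colimit-preserving, they are canonically identified with the left Kan extensions $\ol{\io^* F}$ and $\ol{\io^* G'}$ of their restrictions along $\io \colon \Modff \inj \Ab$. By the universal property of left Kan extensions, natural transformations $F \to G$ correspond bijectively to natural transformations $\io^* F \to \io^* G$ via restriction. Because $\io^* \iota$ is an isomorphism, $\io^* \ga$ lifts uniquely through $\io^* \iota$ to a natural transformation $\phi \colon \io^* F \to \io^* G'$. Transporting $\phi$ back along the adjunction produces $\tild{\ga} \colon F \to G'$, and naturality of the correspondence in the target variable forces $\iota \circ \tild{\ga} = \ga$, which is the desired factorization.

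The main subtlety is the reduction to $\Modff$: it is precisely the colimit-preservation of the source functor $F$ which guarantees $F = \ol{\io^* F}$, and thus lets the Kan extension adjunction transport the lift on $\Modff$ (where $\iota$ is invertible) to a lift on all of $\Ab$. Once this observation is in place, the rest is formal bookkeeping with the adjunction.
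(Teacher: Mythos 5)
Your argument is correct and takes essentially the same route as the paper: both hinge on Lemma \ref{KanExtn}, the identification of $F = - \ot Q_k^S$ with the left Kan extension of its restriction to finitely generated free groups, and the observation that on such groups the Tor term vanishes so that $G(A) = (H\Z)_{k+1}(HA)$ agrees with its linearization $A \ot H\Z_{k+1}H\Z$. The paper phrases this via the naturality square for the coaugmentation $\ol{\io^* G} \to G$ rather than via the hom-set bijection of the Kan extension adjunction, but the content is identical.
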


\begin{proof}
First, note that the assignment $A \mapsto H \Z_{k+1} HA$ defines an additive functor $G \colon \Ab \to \Ab$. Indeed, for abelian groups $A,B$, we have:
\begin{align*}
G(A \op B) &= H \Z_{k+1} H(A \op B) \\
&\cong H \Z_{k+1} (HA \vee HB) \\
&\cong H \Z_{k+1} HA \op H \Z_{k+1} HB \\
&= G(A) \op G(B).
\end{align*}
Now $\ga \colon F \to G$ is a natural transformation from the functor $F = - \ot Q_k^S$ to $G$ and, by Lemma \ref{KanExtn}, induces a commutative diagram
\[
\xymatrix{
\ol{\io^* F} \ar[d]_{\ep_F} \ar[r]^{\ol{\io^* \ga}} & \ol{\io^* G} \ar[d]^{\ep_G} \\
F \ar[r]_{\ga} & G. \\
}
\]
Because $F$ is of the form $F = - \ot F \Z$, it preserves all colimits, and thus $\ep_F$ is an isomorphism. Moreover we have
\[
\ol{\io^* G} = - \ot G \Z = - \ot H\Z_{k+1} H\Z
\]
and the coaugmentation
\[
(\ep_G)_A \colon A \ot H\Z_{k+1} H\Z \to HA_{k+1} H\Z
\]
is the usual inclusion of the tensor summand. Therefore $\ga$ factors through said inclusion.
\end{proof}

\begin{corollary}
In the stable range $k \leq n-2$, every $\Pi$-algebra concentrated in degrees $n$ and $n+k$ is realizable if and only if the map 
\[
\ga_{K(\Z,n)} \colon Q_k^S \to H \Z_{k+1} H \Z
\]
is split injective. Note that the map does not depend on $n$, only on the stable stem $k$.
\end{corollary}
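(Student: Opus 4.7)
The plan is to assemble the result directly from Corollary \ref{SplitInj}, Proposition \ref{TensorSummand}, and the Kan-extension formalism of Lemma \ref{KanExtn}. Corollary \ref{SplitInj} already reduces realizability of every $\Pi$-algebra concentrated in degrees $n$ and $n+k$ (for a fixed bottom group $A_n$) to split injectivity of $\ga_{K(A_n,n)}$, so the statement we must prove is that \emph{all} these split injectivity conditions simultaneously reduce to the single one at $A_n = \Z$.

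First I would use Proposition \ref{TensorSummand} to factor
\[
\ga_{K(A_n,n)} \colon A_n \ot Q_k^S \ral{\tild{\ga}_{A_n}} A_n \ot H\Z_{k+1} H\Z \inj (H\Z)_{k+1}(HA_n),
\]
where the second arrow is the split inclusion of the tensor summand provided by the universal coefficient theorem. Since a composite of a map with a split monomorphism is split injective if and only if the first map is, split injectivity of $\ga_{K(A_n,n)}$ is equivalent to split injectivity of the reduced map $\tild{\ga}_{A_n}$.

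Next, I would observe that $\tild{\ga}$ is a natural transformation between the two functors $A_n \mapsto A_n \ot Q_k^S$ and $A_n \mapsto A_n \ot H\Z_{k+1} H\Z$, both of which are additive and preserve all colimits. By Lemma \ref{KanExtn} (applied to $R = \Z$), any such natural transformation is the extension of its restriction to finitely generated free abelian groups, and in particular is determined by its value on $\Z$. Evaluating at $\Z$ gives $\tild{\ga}_{\Z} = \ga_{K(\Z,n)}$, and the Kan extension formula identifies $\tild{\ga}_{A_n}$ with $A_n \ot \ga_{K(\Z,n)}$. The equivalence then follows on the nose: tensoring with $A_n$ preserves split monomorphisms, so split injectivity of $\ga_{K(\Z,n)}$ implies split injectivity of every $\tild{\ga}_{A_n}$; conversely, specializing to $A_n = \Z$ recovers the hypothesis. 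The $n$-independence of $\ga_{K(\Z,n)}$ follows from the fact that both $Q_k^S$ and $H\Z_{k+1} H\Z$ are stable invariants and the map stabilizes under Freudenthal suspension for $n \geq k+2$.

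There is no genuine obstacle: the statement is essentially a formal consequence of the preceding corollary, proposition, and lemma. The only step that requires a moment's care is the identification of $\tild{\ga}_{A_n}$ with $A_n \ot \ga_{K(\Z,n)}$ via the universal property of the Kan extension; this is what legitimizes reducing the family of split injectivity conditions indexed by $A_n$ to the single condition at $A_n = \Z$.
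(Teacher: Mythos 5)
Your proposal is correct and follows essentially the same route as the paper: reduce via Corollary \ref{SplitInj}, pass to the tensor summand via Proposition \ref{TensorSummand}, and identify the reduced map with $A_n \ot \ga_{K(\Z,n)}$ so that the family of conditions collapses to the single one at $A_n = \Z$. The only difference is cosmetic: you spell out the Kan-extension identification $\tild{\ga}_{A_n} \cong A_n \ot \ga_{K(\Z,n)}$ explicitly, whereas the paper leaves it implicit in the proof of Proposition \ref{TensorSummand} and simply invokes the fact that applying $A_n \ot -$ to a split monomorphism yields a split monomorphism.
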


\begin{proof}
By \ref{SplitInj}, every $\Pi$-algebra concentrated in degrees $n$ and $n+k$ is realizable if and only if the maps 
\[
\ga_{K(A_n,n)} \colon A_n \ot Q_k^S \to (H \Z)_{k+1} (H A_n)
\]
are split injective for every abelian group $A_n$. By \ref{TensorSummand}, this is equivalent to the maps
\[
\ga_{K(A_n,n)} \colon A_n \ot Q_k^S \to A_n \ot H \Z_{k+1} H \Z
\]
being split injective. Since applying $A_n \ot -$ (or any functor) to a split monomorphism yields a split monomorphism, this is equivalent to the single map 
\[
\ga_{K(\Z,n)} \colon Q_k^S \to H \Z_{k+1} H \Z
\]
being split injective.
\end{proof}

\section{Non-realizable examples} \label{sec:NonRealiz}

As noted in Example \ref{k1} and Proposition \ref{k2Realizable}, all $2$-stage $\Pi$-algebras with stem $k=1$ or $k=2$ are realizable -- for any value of $n$, not only stably. We will show that the smallest stem where a non-realizable example appears is $k=3$.

Let us recall the first few stable homotopy groups of spheres; see \cite[\S 4]{Baues00}. In degrees $* \leq 6$, the stable homotopy ring $\pi_*^S$ is generated (as an algebra) by elements $\eta \in \pi_1^S$, $\nu \in \pi_3^S$, and $\al \in \pi_3^S$, subject to relations
\begin{align*}
&2 \eta = 0\\
&4 \nu = \eta^3 \\
&\eta \nu = 0 \\
&2 \nu^2 = 0 \\
&3 \al = 0 \\
&\al^2 = 0.
\end{align*}
Here $\eta$ is the stabilization of the Hopf map $S^3 \to S^2$ and $\nu$ is the $2$-primary part of the stabilization of the Hopf map $H \colon S^7 \to S^4$. Integrally, $\nu$ can be thought of as, say, $3H$. The element $\al$ is the first in the $3$-primary alpha family.

The first few stable homotopy groups are
\[
\pi_i^S = \begin{cases}
\Z &i = 0 \\
\Z/2 \lan \eta \ran &i = 1 \\
\Z/2 \lan \eta^2 \ran &i = 2 \\
\Z/24 \simeq \Z/8 \lan \nu \ran \op \Z/3 \lan \al \ran &i = 3 \\
0 &i = 4 \\
0 &i = 5 \\
\Z/2 \lan \nu^2 \ran &i = 6 \\
\end{cases}
\]
and their indecomposables are
\[
Q_i^S = \begin{cases}
\Z &i = 0 \\
\Z/2 \lan \eta \ran &i = 1 \\
0 &i = 2 \\
\Z/12 \simeq \Z/4 \lan \nu \ran \op \Z/3 \lan \al \ran &i = 3 \\
0 &i = 4 \\
0 &i = 5 \\
0 &i = 6. \\
\end{cases}
\]

\begin{proposition} \label{Smallest}
Let $n \geq 5$. The (stable) $\Pi$-algebra $\ul{A}$ concentrated in degrees $n$ and $n+3$ given by $A_n = \Z$ and $A_{n+3} = \Z/4$ with structure map $\eta_3 \colon A_n \ot Q_3^S \to A_{n+3} = \Z/4$ given by the projection
\[
A_n \ot Q_3^S \cong Q_3^S = \Z/4 \lan \nu \ran \op \Z/3 \lan \al \ran \surj \Z/4 
\]
sending $\nu$ to $1$ is not realizable.
\end{proposition}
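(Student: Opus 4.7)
The plan is to apply Theorem \ref{Criterion}, which reduces realizability of $\ul{A}$ to whether the structure map $\eta_3 \colon Q_3^S \to \Z/4$ factors through
\[
\ga_{K(\Z,n)} \colon Q_3^S \to \Ga_{n+3} K(\Z,n) \cong H_{n+4} K(\Z,n).
\]
Since $n \geq 5$ places us in the stable range $k \leq n-2$, Proposition \ref{TensorSummand} further identifies this map as landing in the summand $\Z \ot H\Z_4 H\Z \cong H\Z_4 H\Z$.

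The first step is to cite (or compute) the stable group
\[
H\Z_4 H\Z \cong H_4(H\Z; \Z) \cong \Z/2 \op \Z/3 \cong \Z/6.
\]
This follows from the Cartan--Eilenberg--MacLane calculation of $H^*(K(\Z,n); \Z/p)$ together with the mod-$p$ Bockstein spectral sequences. At $p=2$, the admissible generators $Sq^4 \iota$ and $Sq^5 \iota = Sq^1 Sq^4 \iota$ of $H^*(H\Z; \Z/2)$ pair off and die on the $E_2$-page, contributing a single $\Z/2$ summand (and in particular no $\Z/4$) to the $2$-primary part of $H_4(H\Z; \Z)$. At $p=3$, the pair $P^1 \iota$ and $\beta P^1 \iota$ contributes a $\Z/3$.

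The key observation is then that $\Z/6$ has no element of order $4$. Since $\nu$ is $4$-torsion in $Q_3^S$, its image $\ga_{K(\Z,n)}(\nu)$ has order dividing $\gcd(4,6) = 2$, so $2\,\ga_{K(\Z,n)}(\nu) = 0$. Consequently, for any hypothetical $f$ with $\eta_3 = f \circ \ga_{K(\Z,n)}$ one would have
\[
2 \eta_3(\nu) = f(2\, \ga_{K(\Z,n)}(\nu)) = 0 \in \Z/4,
\]
contradicting $\eta_3(\nu) = 1$, which gives $2 \eta_3(\nu) = 2 \neq 0$. Hence no such factorization exists and $\ul{A}$ is not realizable.

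The main obstacle is the homology computation, and specifically the verification that the $2$-primary part of $H_4(H\Z; \Z)$ is exactly $\Z/2$ rather than a larger cyclic $2$-group; this is precisely what makes the order-of-image argument work. Everything else is a routine application of the criterion.
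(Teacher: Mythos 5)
Your argument is correct and is essentially the paper's own proof: both reduce to Theorem \ref{Criterion}, use $H\Z_4 H\Z \cong \Z/6$ (the paper cites \cite[Theorem 25.1]{Eilenberg54} where you sketch the Bockstein computation), and conclude from the absence of $4$-torsion in $\Z/6$ that $\ga$ kills $2\nu$ while $\eta_3$ does not. The appeal to Proposition \ref{TensorSummand} is harmless but unnecessary here, since $A_n = \Z$ makes the Tor term vanish anyway.
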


\begin{proof}
According to \cite[Theorem 25.1]{Eilenberg54}, we have $H\Z_4 H\Z \simeq \Z/6 = \Z/2 \op \Z/3$. Therefore the map $\ga \colon Q_3^S \simeq \Z/12 \to \Z/6 \simeq H\Z_4 H\Z$ sends $2 \nu$ to $0$, whereas $\eta_3$ does not. The result follows from \ref{Criterion}.
\end{proof}

Theorem \ref{Criterion} reduces realizability questions to the algebraic problem of understanding the map $\ga$, but it can also be used the other way around. In the following proposition, we start from a realizable $2$-stage $\Pi$-algebra and deduce information about the map $\ga$ using Theorem \ref{Criterion}.

\begin{proposition}
The map $\ga \colon Q_3^S \to H\Z_4 H\Z$ sends $\al$ to a non-zero element (therefore of order $3$).
\end{proposition}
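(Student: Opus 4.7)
The plan is to apply Theorem \ref{Criterion} in reverse: exhibit a realizable $2$-stage $\Pi$-algebra in which $\al$ acts non-trivially, then deduce that $\ga(\al)$ cannot vanish. Since $\eta$ and $\eta^2$ obstruct any $2$-stage structure mixing $2$- and $3$-primary elements in degree $3$, the natural move is to $3$-localize, isolating $\al$.

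Concretely, I would consider the $3$-local sphere spectrum $S_{(3)}$, whose stable stems in degrees $0$ to $3$ are $\Z_{(3)}, 0, 0, \Z/3\lan \al \ran$. Taking its Postnikov section and shifting, set $Y \dfn \Si^n P_3 S_{(3)}$ for some $n \geq 5$. Then $Y$ is a $2$-stage spectrum in the stable range, with $\pi_n Y = \Z_{(3)}$ and $\pi_{n+3} Y = \Z/3$, whose structure map $\eta_3^Y \colon \Z_{(3)} \ot Q_3^S \to \Z/3$ sends $1 \ot \al$ to the generator and $1 \ot \nu$ to $0$ (the latter since $\nu$ is $2$-primary). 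Since $Y$ exists as a spectrum, the first proposition of Section \ref{sec:Stable} ensures that the associated $\Pi$-algebra $\ul{A}_Y$ is realizable by a space.

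Now apply Theorem \ref{Criterion} to $\ul{A}_Y$: the map $\eta_3^Y$ factors through $\ga_{K(\Z_{(3)},n)}$, so $\ga_{K(\Z_{(3)},n)}(1 \ot \al) \neq 0$. Proposition \ref{TensorSummand} combined with naturality of $\ga$ in the coefficient group places this element in the summand $\Z_{(3)} \ot H\Z_4 H\Z$, where it equals $1 \ot \ga(\al)$. Using the isomorphism $H\Z_4 H\Z \cong \Z/6$ from the proof of Proposition \ref{Smallest}, one has $\Z_{(3)} \ot H\Z_4 H\Z \cong \Z/3$, so $1 \ot \ga(\al) \neq 0$ there forces $\ga(\al) \neq 0$. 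Since $3 \al = 0$ in $Q_3^S$, additivity of $\ga$ gives $3 \ga(\al) = 0$, and hence $\ga(\al)$ has order exactly $3$.

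The only delicate point is confirming naturality of $\ga_{K(-,n)}$ in the coefficient group, which follows because $\ga$ was constructed as the edge morphism of a spectral sequence functorial in the space $X$, and $A \mapsto K(A,n)$ is a functor from $\Ab$ to pointed spaces. I do not anticipate any other obstacle; the argument is essentially the mirror image of the non-realizability argument for Proposition \ref{Smallest}, using $\al$ in place of $\nu$ and exploiting that $\al$ \emph{can} be realized rather than that $\nu$ cannot.
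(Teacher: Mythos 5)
Your proposal is correct and is essentially the paper's own argument: both proofs $3$-localize the sphere to isolate $\al$, obtain a realizable $2$-stage object with $\pi_n = \Z_{(3)}$ and $\pi_{n+3} = \Z/3\lan\al\ran$ detecting $\al$, and then run Theorem \ref{Criterion} in reverse, using Proposition \ref{TensorSummand} to identify the relevant map with $\Z_{(3)} \ot \ga$. The only (immaterial) difference is packaging: the paper works directly with the space $P_{n+3}S^n_{(3)}$, whereas you build the spectrum $\Si^n P_3 S_{(3)}$ and pass back to spaces via the proposition at the start of Section \ref{sec:Stable}.
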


\begin{proof}
Take $n \geq 5$ and consider the localization at $3$ of the sphere $S^n \to S^n_{(3)}$, then take Postnikov sections $P_{n+3} S^n \to P_{n+3} S^n_{(3)} =: X$. Because this map induces $3$-localization on homotopy groups (and a map of $\Pi$-algebras), the $\Pi$-algebra $\pi_* X$ consists of two non-zero groups
\begin{align*}
&\pi_n X \cong \Z_{(3)} \\
&\pi_{n+3} X \cong \Z/3 \lan \al \ran
\end{align*}
with structure map
\[
\eta_3 \colon \pi_n X \ot Q_3^S \ral{\simeq} \pi_{n+3} X  
\]
sending $\al$ to $\al$, i.e. the identity via the identification
\[
\pi_n X \ot Q_3^S \cong \Z_{(3)} \ot \left( \Z/4 \lan \nu \ran \op \Z/3 \lan \al \ran \right) = \Z/3 \lan \al \ran.
\]
By \ref{Criterion}, we deduce that the map
\[
\Z_{(3)} \ot \ga \colon \Z_{(3)} \ot Q_3^S \cong \Z/3 \lan \al \ran \to \Z_{(3)} \ot H \Z_4 H \Z \simeq \Z/3
\]
sends $\al$ to a non-zero element, and therefore so does $\ga$.
\end{proof}

In fact, the same argument yields a more general statement.

\begin{proposition} \label{Alpha1}
Fix a prime $p \geq 3$ and consider the Greek letter element $\al_1 \in Q_{2(p-1)-1}^S$. The map $\ga \colon Q_{2(p-1)-1}^S \to H\Z_{2(p-1)} H\Z$ sends $\al_1$ to a non-zero element (therefore of order $p$).
\end{proposition}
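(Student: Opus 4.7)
The plan is to repeat the argument of the preceding proposition verbatim, with the prime $3$ replaced by $p$ and the element $\al$ replaced by $\al_1$.

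Fix $n$ large enough to lie in the stable range, say $n \geq 2(p-1)$, and consider the $p$-localization map $S^n \to S^n_{(p)}$. Taking Postnikov sections, set $X \dfn P_{n+2(p-1)-1} S^n_{(p)}$. A classical result of Serre gives $(\pi_i^S)_{(p)} = 0$ for $1 \leq i \leq 2(p-1)-2$, together with $(\pi_{2(p-1)-1}^S)_{(p)} = \Z/p \lan \al_1 \ran$. Since the map $S^n \to S^n_{(p)}$ induces $p$-localization on homotopy groups, the $\Pi$-algebra $\pi_* X$ is concentrated in exactly the two degrees $n$ and $n+2(p-1)-1$, with $\pi_n X \cong \Z_{(p)}$, $\pi_{n+2(p-1)-1} X \cong \Z/p \lan \al_1 \ran$, and structure map $\eta \colon \pi_n X \ot Q_{2(p-1)-1}^S \to \pi_{n+2(p-1)-1} X$ sending $1 \ot \al_1$ to $\al_1$.

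Since $X$ realizes this $\Pi$-algebra, Theorem \ref{Criterion} produces a factorization of $\eta$ through $\ga_{K(\Z_{(p)}, n)}$. Combined with Proposition \ref{TensorSummand}, this forces the map
\[
\Z_{(p)} \ot \ga \colon \Z_{(p)} \ot Q_{2(p-1)-1}^S \to \Z_{(p)} \ot H\Z_{2(p-1)} H\Z
\]
to send $\al_1$ to a non-zero element, and therefore so does $\ga$ itself. Since $\al_1$ has order $p$, the image $\ga(\al_1)$ has order exactly $p$.

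The only non-formal ingredient beyond the previous proof is the classical vanishing of the intermediate $p$-local stable stems, together with the identification of the first non-trivial one with $\al_1$; this is what ensures that $X$ is genuinely a $2$-stage space concentrated in the two degrees required for Theorem \ref{Criterion} to apply. Everything else is a direct copy of the argument just given.
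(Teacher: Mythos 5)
Your argument is correct and is essentially the paper's own proof: $p$-localize the sphere, truncate to get a genuine $2$-stage space detecting $\al_1$ (using Serre/Toda's vanishing of the lower $p$-local stems), and apply Theorem \ref{Criterion} together with Proposition \ref{TensorSummand}. The only nitpick is that the stable range $k \leq n-2$ with $k = 2(p-1)-1$ requires $n \geq 2(p-1)+1$ rather than $n \geq 2(p-1)$ (the latter is exactly the metastable case $k = n-1$), but your stated intent of taking $n$ in the stable range makes this harmless.
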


\begin{proof}
Write the stable stem $k \dfn \abs{\al_1} = 2(p-1)-1$ and take $n$ very large, namely $n \geq k+2$. Consider the localization at $p$ of the sphere $S^n \to S^n_{(p)}$, then take Postnikov sections $P_{n+k} S^n \to P_{n+k} S^n_{(p)} =: X$.

A key feature of $\al_1$ is that it generates $\pi_{2p-3}^S \ot \Z_{(p)} \simeq \Z/p$ and is the first element of order a power of $p$ in $\pi_*^S$ \cite[(13.4)]{Toda62}. Thus the $p$-localization of all lower (positive) stems is zero. Therefore the $\Pi$-algebra $\pi_* X$ consists of two non-zero groups
\begin{align*}
&\pi_n X \cong \Z_{(p)} \\
&\pi_{n+k} X \cong (\pi_k^S)_{(p)} \simeq \Z/p
\end{align*}
in which $\al_1$ is detected. More precisely, taking $1 \in \pi_n X$ we have $\al_1^*(1) = \al_1 \neq 0$ in $\pi_{n+k} X$. By \ref{Criterion} (and Remark \ref{TopDetect}), $\ga$ sends $\al_1$ to a non-zero element.
\end{proof}

\subsection*{Infinite families}

Proposition \ref{Smallest} provides a non-realizable $2$-stage $\Pi$-algebra with the lowest possible stem dimension $k=3$. Our next goal is to find an infinite family of such examples, in infinitely many stem dimensions $k$. For this we need an infinite family of indecomposables in $Q_*$. The Greek letter elements, for example the $\al$ and $\be$ families, are good candidates.

The next proposition provides non-realizable examples using a different method: finding elements of homotopy groups of spheres which are indecomposable as primary operations, but decomposable as secondary operations.

\begin{proposition} \label{Alpha}
Fix a prime $p \geq 3$ and consider the alpha elements $\al_i \in Q_{2i(p-1)-1}^S$ \cite[Definition 1.3.10, Theorem 1.3.11]{Ravenel86}. For every $i \geq 2$, the map $\ga \colon Q_{2i(p-1)-1}^S \to H\Z_{2i(p-1)} H\Z$ sends $\al_i$ to zero.
\end{proposition}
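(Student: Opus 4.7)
The plan is to prove $\ga(\al_i) = 0$ by showing, via Remark \ref{TopDetect}, that $\al_i$ cannot be detected by any $2$-stage space when $i \geq 2$. Concretely, I would take an arbitrary $2$-stage space $X$ with $\pi_* X$ concentrated in degrees $n$ and $n+k$ (where $k = 2i(p-1) - 1$), pick any class $x \in \pi_n X$, and show that $\al_i^*(x) = x \al_i = 0$ in $\pi_{n+k} X$.

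The key input is the classical Toda bracket representation
\[
\al_i \in \lan \al_1, p, \al_{i-1} \ran \subseteq \pi_{2i(p-1) - 1}^S,
\]
valid for $i \geq 2$ at odd primes $p$, which comes from the Adams $v_1$-self-map description $\al_j = j \circ v_1^j \circ \io$ on the mod-$p$ Moore spectrum $S/p$ (see \cite{Toda62} or \cite{Ravenel86}); the bracket is defined because $p \al_1 = 0$ and $p \al_{i-1} = 0$.

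The proof itself would proceed in three short steps. First, for $i \geq 2$ the intermediate degrees $n + 2(p-1) - 1$ and $n + 2(p-1)$ both lie strictly between $n$ and $n+k$, so $\pi_{n + 2(p-1) - 1} X$ and $\pi_{n + 2(p-1)} X$ vanish, and in particular $x \al_1 = 0$. Second, the juggling identity for Toda brackets, applied to the relation $\al_i \in \lan \al_1, p, \al_{i-1} \ran$, would give
\[
x \al_i \in x \cdot \lan \al_1, p, \al_{i-1} \ran \subseteq \lan x \al_1, p, \al_{i-1} \ran = \lan 0, p, \al_{i-1} \ran,
\]
and the trivial null-homotopy of $x \al_1 = 0$ places $0$ inside the right-hand bracket. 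Third, the indeterminacy of $\lan 0, p, \al_{i-1} \ran$ is contained in $(x \al_1) \cdot \pi_{n+k-1}(S^{n + 2(p-1) - 1}) + \pi_{n + 2(p-1)}(X) \cdot \al_{i-1}$, which vanishes term by term. Combined, these yield $x \al_i = 0$ on the nose, contradicting any hypothetical detection and hence proving $\ga(\al_i) = 0$.

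The step I expect to be most delicate is the bookkeeping of Toda bracket indeterminacies through the juggling move: one must ensure that the inclusion $x \cdot \lan \al_1, p, \al_{i-1} \ran \subseteq \lan x \al_1, p, \al_{i-1} \ran$ is at the element level (so the conclusion is an equality rather than mere membership in a nontrivial coset) and that the shifted bracket really collapses to $\{0\}$. The geometric picture underlying the argument is that $\al_i$ decomposes as a secondary operation routed through the intermediate degree $n + 2(p-1) - 1$, and therefore cannot be supported by any space whose homotopy vanishes in that degree; this is the phenomenon dual to Proposition \ref{Alpha1}, where $\al_1$ is detected precisely because no intermediate degrees are available for factorization.
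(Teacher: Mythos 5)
Your proposal is correct and follows essentially the same route as the paper: both use the Toda bracket $\al_i \in \lan \al_1, p, \al_{i-1} \ran$, postcompose by an arbitrary $x \in \pi_n X$ to land in $\lan 0, p, \al_{i-1} \ran$ (using $x\al_1 = 0$ from the vanishing of the intermediate homotopy group), and check that the indeterminacy $(\pi_{n+2(p-1)} X)\cdot\al_{i-1}$ vanishes so the bracket collapses to $\{0\}$. The details you flag as delicate (element-level containment under postcomposition and the indeterminacy bookkeeping) are handled in the paper exactly as you describe, via \cite[Proposition 1.2]{Toda62}.
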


\begin{proof}
For $i \geq 2$, there is a Toda bracket \cite[(13.4)]{Toda62}
\[ 
\al_i \in \lan \al_1, p, \al_{i-1} \ran
\]
so that $\al_i$ cannot be detected by a $2$-stage space (or spectrum), and by \ref{TopDetect} we have $\ga(\al_i) = 0$.

In more detail, write $s = \abs{\al_1}$ and $t = \abs{\al_{i-1}}$ so that $\abs{\al_i} = s+t+1$, and assume $X$ is a space with homotopy concentrated in degrees $n$ and $n+s+t+1$ (for $n$ large). Let us illustrate the Toda bracket setup:
\[
S^{n+s+t} \ral{\al_{i-1}} S^{n+s} \ral{p} S^{n+s} \ral{\al_1} S^n.
\]
Pick any $x \in \pi_n X$. We claim that the precomposition $\al_i^*(x) = x \al_i$ is null. Postcomposing by $x$ defines a map \cite[Proposition 1.2 (iv)]{Toda62}
\begin{align*}
\lan \al_1, p, \al_{i-1} \ran \ral{x \circ -} &\lan x \al_1, p, \al_{i-1} \ran \\
= &\lan 0, p, \al_{i-1} \ran
\end{align*}
using the fact $x \al_1 \in \pi_{n+s} X = 0$. The indeterminacy of $\lan 0, p, \al_{i-1} \ran$ is
\begin{align*}
0 [S^{n+s+t+1}, S^{n+s}] &+ [S^{n+s+1}, X] \al_{i-1} \\
&= (\pi_{n+s+1} X) \al_{i-1} \\
&= \{ 0 \}
\end{align*}
again using the assumption on $\pi_* X$. Moreover, $0$ is clearly a representative in $\lan 0, p, \al_{i-1} \ran$ \cite[Proposition 1.2 (0)]{Toda62}, thus we have equality $\lan 0, p, \al_{i-1} \ran = \{ 0 \}$. Therefore $x \al_i \in \lan 0, p, \al_{i-1} \ran$ is null, as claimed.
\end{proof}


\begin{proposition} \label{DividedAlpha}
Fix a prime $p \geq 3$ and consider the divided alpha elements $\al_{i/j} \in Q_{2i(p-1)-1}^S$, where $j \leq \nu_p(i) + 1$, and $\nu_p$ denotes the $p$-adic valuation \cite[Definition 1.3.19]{Ravenel86}. For every $j \geq 2$, we have $p \al_{i/j} \neq 0$ but $\ga (p \al_{i/j}) = 0$.
\end{proposition}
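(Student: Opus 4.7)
The approach mirrors the proof of Proposition \ref{Alpha}: exhibit a Toda bracket representation with $\al_1$ as an entry, then apply Remark \ref{TopDetect} to conclude that $p \al_{i/j}$ cannot be detected by any $2$-stage space.

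The non-vanishing $p \al_{i/j} \neq 0$ is immediate: by the definition of the divided alphas \cite[Definition 1.3.19]{Ravenel86}, $\al_{i/j}$ has order exactly $p^j$, so for $j \geq 2$ the element $p \al_{i/j}$ has order $p^{j-1} \geq p$. Equivalently, one uses the standard identity $p \al_{i/j} = \al_{i/(j-1)} \neq 0$, which also sets up the second half of the proof.

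For the vanishing $\ga(p \al_{i/j}) = 0$, the plan is to combine the identity $p \al_{i/j} = \al_{i/(j-1)}$ with a Toda bracket representation of the form
\[
\al_{i/(j-1)} \in \lan \al_1, p^{j-1}, \al_{i-1} \ran,
\]
which is the natural divided generalization of the bracket $\al_i \in \lan \al_1, p, \al_{i-1} \ran$ used in Proposition \ref{Alpha} (and reduces to it when $j = 2$). With this representation in hand, the argument of \ref{Alpha} carries over verbatim. If $X$ is a $2$-stage space with homotopy concentrated in degrees $n$ and $n + 2i(p-1) - 1$ (for $n$ large) and $x \in \pi_n X$, then postcomposition yields
\[
x \cdot \al_{i/(j-1)} \in \lan x \al_1, p^{j-1}, \al_{i-1} \ran = \lan 0, p^{j-1}, \al_{i-1} \ran,
\]
since $x \al_1 \in \pi_{n+1} X = 0$ (using $\abs{\al_1} = 1 < 2i(p-1) - 1$ for $p \geq 3$ and $j \leq \nu_p(i)+1$ forcing $i \geq p \geq 3$), and the indeterminacy $\pi_{n+2}(X) \cdot \al_{i-1}$ likewise vanishes. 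Hence $x \cdot p \al_{i/j} = 0$ for every $x \in \pi_n X$, so $p \al_{i/j}$ is undetectable by any $2$-stage space, and Remark \ref{TopDetect} gives $\ga(p \al_{i/j}) = 0$.

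The main obstacle is pinning down the exact Toda bracket representation of $\al_{i/(j-1)}$: one needs $\al_1$ as an entry (so that $x \al_1$ lands in the empty intermediate degree), and the indeterminacy must be supported in degrees where $\pi_* X$ is zero in our $2$-stage setting. The classical references (e.g.\ Toda \cite[(13.4)]{Toda62} and Ravenel \cite[Ch.~1]{Ravenel86}) furnish such presentations of divided alphas via the $v_1$-periodic Greek letter construction, though minor bookkeeping on indices and on the $p$-power entry may be required. Once that classical input is in place, the remainder of the argument is a direct transcription of the proof of Proposition \ref{Alpha}.
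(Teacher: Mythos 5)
Your argument for $\ga(p\al_{i/j}) = 0$ hinges on the Toda bracket representation $\al_{i/(j-1)} \in \lan \al_1, p^{j-1}, \al_{i-1}\ran$, which you do not establish and which is not what the cited reference provides: Toda's (13.4) gives the bracket only in the non-divided case, $\al_i \in \lan \al_1, p, \al_{i-1}\ran$. For $j=2$ your argument collapses to Proposition \ref{Alpha} applied to $p\al_{i/2} = \al_{i/1} = \al_i$ (which is fine, since $p \mid i$ forces $i \geq p \geq 2$), but for $j \geq 3$ the element $p\al_{i/j} = \al_{i/(j-1)}$ is itself a divided alpha, and the required bracket with middle entry $p^{j-1}$ is exactly the unproved input; you acknowledge this as ``the main obstacle'' but leave it open, so the proof is incomplete precisely where it matters. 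Two smaller points: $\abs{\al_1} = 2(p-1)-1 = 2p-3$, not $1$ (your conclusion $x\al_1 = 0$ survives because $2p-3$ is strictly between $0$ and $2i(p-1)-1$ when $i > 1$, but the stated reason is wrong); and the non-vanishing $p\al_{i/j} \neq 0$ must hold in $Q_*^S$, not merely in $\pi_*^S$ --- one needs that the order of $\al_{i/j}$ in the indecomposable quotient is still $p^j$, which you do not address.

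The paper's proof avoids Toda brackets entirely and is essentially formal: since $\al_{i/j}$ has order $p^j$ in $Q_*^S$ with $j \geq 2$, its image $\ga(\al_{i/j})$ is a $p$-torsion element of $H\Z_{2i(p-1)}H\Z$, and by Cartan and Kochman all $p$-torsion in $H\Z_*H\Z$ is annihilated by a single power of $p$; hence $\ga(p\al_{i/j}) = p\,\ga(\al_{i/j}) = 0$. To salvage your approach for $j \geq 3$ you would need to locate or prove the divided bracket, whereas the exponent argument requires no input beyond the known additive structure of $H\Z_*H\Z$.
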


\begin{proof}
Recall a few properties of the divided alpha elements \cite{Ravenel86} \cite[\S 1]{Behrens09}. The element
\[
\al_{i/j} \in \Ext_{BP_* BP}^{1, 2i(p-1)}(BP_*, BP_*)
\]
defined in the $E_2$-term of the Adams-Novikov spectral sequence is a permanent cycle and therefore represents an element in homotopy $\al_{i/j} \in \pi_{2i(p-1)-1}^S$ which is known to be in the image of the $J$-homomorphism. It has (additive) order $p^j$, is indecomposable, and its order in $Q_*^S$ is still $p^j$. This proves $p \al_{i/j} \neq 0$ in $Q_*^S$.

On the other hand, the $p$-torsion in $H\Z_*H\Z$ is annihilated by a single power of $p$ \cite[Theorem 3.1]{Kochman82} \cite[\S 11, Theorem 2]{Cartan56}. Therefore the map $\ga \colon Q_*^S \to H\Z_{*+1}H\Z$ must send $p \al_{i/j}$ to zero.
\end{proof}

\begin{remark}
In Proposition \ref{DividedAlpha}, we may as well take $i = p^{j-1}$.
\end{remark}

Whenever $\ga \colon Q_k^S \to H\Z_{k+1} H\Z$ is non-injective, we can find a corresponding non-realizable $2$-stage $\Pi$-algebra in stem dimension $k$. Therefore, Propositions \ref{Alpha} and \ref{DividedAlpha} provide infinite families of non-realizable examples, in infinitely many stem dimensions.

Note that \cite[Theorem 8.1]{Blanc95} also provides a (different) infinite family of non-realizable $\Pi$-algebras, which can be truncated to two non-zero degrees. The argument used there is similar to that of \ref{Alpha}.

\subsection*{A $3$-stage example}

\begin{proposition}
The stable $3$-stage $\Pi$-algebra $\ul{A}$ defined by $A_n = A_{n+1} = A_{n+2} = \Z/2$ (where $n \geq 4$) with structure maps
\begin{align*}
&\eta_1 \colon \Ga_n^1 (A_n) = A_n \ot \Z/2 = \Z/2 \ral{\cong} \Z/2 = A_{n+1} \\
&\eta_2 \colon \Ga_n^2 (A_n, \eta_1) = A_{n+1} \ot \Z/2 = \Z/2 \ral{\cong} \Z/2 = A_{n+2}
\end{align*}
is non-realizable.
\end{proposition}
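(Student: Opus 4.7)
My plan is to derive a contradiction from a Toda bracket computation, in the same spirit as the argument in the proof of Proposition~\ref{Alpha}. Suppose $\ul{A}$ were realizable; by the equivalence between spatial and spectral realizability at the start of Section~\ref{sec:Stable}, I may take a spectrum $X$ with $\pi_n X = \pi_{n+1} X = \pi_{n+2} X = \Z/2$ and with $\eta$-action an isomorphism in each pair of consecutive degrees. If $x \in \pi_n X$ denotes the generator, the assumption that $\eta_1$ and $\eta_2$ are both isomorphisms forces $x\eta^2 = (x\eta)\eta$ to be the nonzero element of $\pi_{n+2} X$.

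The idea is to contradict this using the classical Toda bracket identity
\[
\eta^2 \in \lan 2, \eta, 2 \ran \subseteq \pi_2^S,
\]
which moreover has trivial indeterminacy, so in fact $\lan 2, \eta, 2 \ran = \{\eta^2\}$. (On the $E_2$-page of the mod~$2$ Adams spectral sequence this is $h_1^2 = \lan h_0, h_1, h_0 \ran$, and the corresponding Massey product lifts to the Toda bracket in homotopy.) Combining this with the naturality of Toda brackets under the $\pi_*^S$-action on $\pi_* X$, together with $2x = 0$ in the exponent-$2$ group $\pi_n X$, I obtain the chain of containments
\[
x\eta^2 \in \lan 2, \eta, 2 \ran \cdot x \subseteq \lan 2, \eta, 2x \ran = \lan 2, \eta, 0 \ran \subseteq \pi_{n+2} X.
\]
The final Toda bracket equals its own indeterminacy $2 \cdot \pi_{n+2} X + \pi_2^S \cdot 0 = 2 \cdot \pi_{n+2} X$, which vanishes since $\pi_{n+2} X = \Z/2$ has exponent~$2$. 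Hence $x\eta^2 = 0$, contradicting the first paragraph.

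The only input that is not entirely routine is the identity $\eta^2 \in \lan 2, \eta, 2\ran$ with trivial indeterminacy; this is a classical fact, traceable either to the Adams spectral sequence computation mentioned above or to a direct construction of the bracket via nullhomotopies for the mod~$2$ Moore spectrum $S/2$. The rest is a bookkeeping exercise with the $\pi_*^S$-module action and the shuffle identity for Toda brackets, so I expect no surprises there.
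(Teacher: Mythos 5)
Your argument is correct, but it takes a genuinely different route from the paper. The paper disposes of this example in one step by citing the obstruction theory of \cite[Theorem 3.3 (B)]{Baues00}: the obstruction $O(\ul{A}) = \eta_2 \circ E_n(\eta_1)$ is computed to be the composite of isomorphisms $\Z/2 \to \Z/2$, hence non-zero, hence $\ul{A}$ is non-realizable. You instead give a self-contained contradiction via the Toda bracket $\lan 2, \eta, 2 \ran = \{\eta^2\}$ and the juggling/shuffle formulas, in the same style as the paper's own Proposition \ref{Alpha} for the elements $\al_i$; your reduction to a spectrum is legitimate since $n \geq 4$ places the degrees $n, n+1, n+2$ in the stable range. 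The two arguments are secretly the same obstruction: the map $E_n(\eta_1)$ factoring through $\Tor(A_n, \Z/2)$ is precisely Baues's algebraic encoding of the secondary operation attached to the relations $2\eta = 0 = \eta \cdot 2$, i.e., of the bracket $\lan 2, \eta, 2\ran$. What your version buys is transparency and independence from the cited machinery, at the cost of importing the classical identity $\lan 2, \eta, 2 \ran = \{\eta^2\}$ (e.g., from the Massey product $\lan h_0, h_1, h_0 \ran = h_1^2$ together with Moss's convergence theorem, or from $2 \cdot \id_{S/2} \neq 0$); what the paper's version buys is brevity and uniformity with its treatment of $k$-invariant obstructions elsewhere. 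One cosmetic point: fix a convention for which outer slot of the bracket absorbs the module element $x$ (the paper's Proposition \ref{Alpha} uses postcomposition into the first slot, $x \circ \lan \al, \be, \ga \ran \subseteq \lan x\al, \be, \ga\ran$); either choice yields $\lan 0, \eta, 2 \ran$ or $\lan 2, \eta, 0 \ran$ with indeterminacy $2 \cdot \pi_{n+2} X = 0$, so the conclusion $x\eta^2 = 0$ stands.
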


\begin{proof}
The map $E_n(\eta_1)$ described in \cite[\S 3.2]{Baues00} is the composite
\[
\resizebox{\textwidth}{!}{
\xymatrix{
\Tor(A_n,\Z/2) \ar@{^{(}->}[r]^-{i} & A_n \ar@{->>}[r]^-{q} & A_n \ot \Z/2 \ar[r]^-{\eta_1} & A_{n+1} \ar@{->>}[r]^-{q} & A_{n+1} \ot \Z/2 \cong \Ga_n^2 (A_n,\eta_1) \\
}
}
\]
which in our case is the isomorphism
\[
\xymatrix{
\Z/2 \ar@{^{(}->}[r]^-{i}_-{\cong} & \Z/2 \ar@{->>}[r]^-{q}_-{\cong} & \Z/2 \ar[r]^-{\eta_1}_-{\cong} & \Z/2 \ar@{->>}[r]^-{q}_-{\cong} & \Z/2. \\
}
\]
The obstruction $O(A) = \eta_2 \circ E_n(\eta_1)$ described in \cite[Theorem 3.3 (B)]{Baues00} is the non-zero map $\Z/2 \ral{\cong} \Z/2 \ral{\cong} \Z/2$. Therefore $\ul{A}$ is non-realizable. 
\end{proof}

\begin{remark}
By contrast, the example in \cite[Example 7.18]{Blanc95} with the same homotopy groups but a different $\Pi$-algebra structure is in fact realizable.
\end{remark}

\section{Proofs} \label{sec:Proofs}

\subsection*{Theories and $\pi_*^S$-modules}

The category $\PI$ forms a \emph{theory} in the sense of Lawvere \cite[\S 6]{Baues99}, more precisely a \emph{graded} (or \emph{multisorted}) \emph{theory} \cite[\S 8]{Baues99}. We adopt the following convention.

\begin{definition}
A \Def{theory} is a category with finite coproducts, including the empty coproduct (initial object $*$).

Let $\TT$ be a theory. A \Def{model} for $\TT$ is a product-preserving functor $\TT^{\opp} \to \Set$, in other words, a contravariant functor sending coproducts to products.

As in \cite[\S 1]{Baues00}, let $\model(\TT):= \Fun^{\x}(\TT^{\opp}, \Set)$ denote the category of models for a theory $\TT$.
\end{definition}

In this terminology, $\Pi$-algebras are models for $\PI$, or in symbols: $\PiAlg = \model(\PI)$. Note that $\PI_n$ and $\PI_n^k$ are also theories, and the inclusion functors $\PI_n^k \to \PI_n \to \PI$ are maps of theories, i.e., preserve coproducts. The equivalences $\PiAlg_n \cong \model(\PI_n)$ and $\PiAlg_n^k \cong \model(\PI_n^k)$ are proved in \cite[Proposition 4.5, Remark 4.6]{Frankland11}.

Let us study the stable case as in Section \ref{sec:Stable} more precisely. Given a spectrum $Z$, its homotopy groups $\pi_* Z$ naturally form a $\pi_*^S$-module, where $\pi_*^S$ is the stable homotopy ring. This algebraic structure can also be described as a model for a theory.

\begin{notation}
Let $\Ho \Sp$ denote the stable homotopy category \cite[\S 2.2]{Margolis83} and let $\PIst$ denote its full subcategory consisting of finite wedges of sphere spectra $\vee S^{n_i}$, $n_i \in \Z$. Here again, the empty wedge (a point) is allowed.
\end{notation}

We have the isomorphism of categories $\model(\PIst) \cong \pi_*^S\Mod$, sending a model $M$ to the $\pi_*^S$-module with $i^{\text{th}}$ graded piece $M_i \dfn M(S^i)$, endowed with the induced precomposition operations. Given a spectrum $Z$, the realizable $\pi_*^S$-module $\pi_* Z$ corresponds to the functor $[-,Z]$.

We can now make the relationship between $\Pi$-algebras and $\pi_*^S$-modules precise. Consider the suspension spectrum functor $\Si^{\infty} \colon \PI \to \PIst$ which sends maps to their stabilization. Because $\Si^{\infty}$ preserves coproducts (wedges), it induces a restriction functor on models
\[
\Om^{\infty} \dfn (\Si^{\infty})^* \colon \pi_*^S\Mod \to \PiAlg.
\]
Concretely, $\Om^{\infty} M$ has the same underlying graded group as $M$ in degrees $i \geq 1$, and maps between spheres act on $\Om^{\infty} M$ via their stabilization. The notation $\Om^{\infty}$ is justified by the following proposition.

\begin{proposition} \label{LoopsInfty}
For any spectrum $Z$, there is an isomorphism of $\Pi$-algebras $\pi_* (\Om^{\infty} Z) \cong \Om^{\infty} (\pi_* Z)$, which is natural in $Z$.
\end{proposition}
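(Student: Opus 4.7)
The plan is to unfold both sides as functors $\PI^{\opp} \to \Set$ and identify them via the suspension--loop adjunction $\Si^{\infty} \dashv \Om^{\infty}$ between $\Topp_*$ (or rather $\Ho \Topp_*$) and $\Ho \Sp$. Concretely, for any pointed space $Y$, the homotopy $\Pi$-algebra $\pi_* Y$ is by definition the functor $[-,Y] \colon \PI^{\opp} \to \Set$; applied to $Y = \Om^{\infty} Z$, this sends a finite wedge of spheres $W$ to $[W, \Om^{\infty} Z]_{\Ho \Topp_*}$. On the other hand, the $\pi_*^S$-module $\pi_* Z$ corresponds to the functor $[-, Z] \colon (\PIst)^{\opp} \to \Set$, and its image under $\Om^{\infty} = (\Si^{\infty})^*$ is the composite $W \mapsto [\Si^{\infty} W, Z]_{\Ho \Sp}$.

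First I would write down, for each $W \in \PI$, the adjunction isomorphism
\[
\phy_{W,Z} \colon [W, \Om^{\infty} Z]_{\Ho \Topp_*} \ral{\cong} [\Si^{\infty} W, Z]_{\Ho \Sp},
\]
which is a standard fact about the stabilization adjunction (and is why $\Om^{\infty}$ deserves its name on the homotopy level). This gives a candidate isomorphism at the level of underlying graded sets. Then I would verify functoriality in $W$: for a morphism $f \colon W' \to W$ in $\PI$, naturality of the adjunction identifies precomposition by $f$ on the left with precomposition by $\Si^{\infty} f$ on the right, which is precisely the action of $\PI$ on $\Om^{\infty}(\pi_* Z)$ since the latter was defined by restriction along $\Si^{\infty} \colon \PI \to \PIst$. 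This verifies that $\phy_{-,Z}$ is a natural isomorphism of functors $\PI^{\opp} \to \Set$; the product-preservation condition is automatic on both sides since both functors are representable (and $\Si^{\infty}$ sends wedges to wedges).

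Finally I would record naturality in $Z$: for a map of spectra $Z \to Z'$, postcomposition is evidently compatible with the adjunction isomorphism, so the collection $\{\phy_{-,Z}\}_Z$ assembles into a natural isomorphism of functors $\Ho \Sp \to \PiAlg$. The only subtlety is bookkeeping: one must make sure the $\Pi$-algebra structure maps (action of morphisms of $\PI$) on $\Om^{\infty}(\pi_* Z)$ really are obtained by stabilizing maps of wedges of spheres and then applying $[-, Z]_{\Ho \Sp}$, which is exactly the definition of the restriction functor $(\Si^{\infty})^*$. There is no serious obstacle beyond this verification; the content of the proposition is essentially the identification of two presentations of the same set-valued functor through the adjunction.
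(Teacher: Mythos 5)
Your proposal is correct and follows essentially the same route as the paper: both unfold $\pi_*(\Om^{\infty} Z)(S) = [S, \Om^{\infty} Z]$ and $\Om^{\infty}(\pi_* Z)(S) = [\Si^{\infty} S, Z]$ and invoke the adjunction $\Si^{\infty} \dashv \Om^{\infty}$, with naturality in $S$ giving the $\Pi$-algebra isomorphism and naturality in $Z$ giving naturality of that isomorphism. The extra remarks on product-preservation and on the action of $\PI$ being defined by restriction along $\Si^{\infty}$ are harmless elaborations of what the paper leaves implicit.
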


\begin{proof}
Let $S$ be an object of $\PI$, that is, a finite wedge of spheres. By definition, we have:
\begin{align*}
&\pi_* (\Om^{\infty} Z)(S) = [S,\Om^{\infty} Z] \\
&\Om^{\infty} (\pi_* Z)(S) = (\pi_* Z) (\Si^{\infty} S) = [\Si^{\infty} S, Z].
\end{align*}
Moreover, $\Si^{\infty}$ is left adjoint to $\Om^{\infty}$ so that we have an isomorphism of sets
\[
[S,\Om^{\infty} Z] \cong [\Si^{\infty} S, Z]
\]
which is natural in $S$ and $Z$. Naturality in $S$ provides the isomorphism of $\Pi$-algebras $\pi_* (\Om^{\infty} Z) \simeq \Om^{\infty} (\pi_* Z)$, while naturality in $Z$ implies that this isomorphism of $\Pi$-algebras is also natural.
\end{proof}

Consider the full subcategories $(\PIst)_n$ and $(\PIst)_n^k$ of $\PIst$, which are themselves theories. As in the unstable picture, the inclusion functors $(\PIst)_n^k \to (\PIst)_n \to \PIst$ are maps of theories. Here again, there are isomorphisms of categories $\pi_*^S \Mod_n \cong \model((\PIst)_n)$ and $\pi_*^S \Mod_n^k \cong \model((\PIst)_n^k)$.

\begin{proposition} \label{StableRange}
In the stable range $k \leq n-2$, the functor $\Om^{\infty}$ restricts to an equivalence of categories
\[
\Om^{\infty} \colon \pi_*^S \Mod_n^k \ral{\cong} \PiAlg_n^k.
\]
\end{proposition}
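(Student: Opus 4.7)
The plan is to reduce the claim to showing that $\Si^{\infty}\colon \PI_n^k \to (\PIst)_n^k$ is already an equivalence of underlying categories. Once that is in hand, restriction of models along an equivalence of theories is again an equivalence, so $(\Si^{\infty})^* \colon \model((\PIst)_n^k) \ral{\cong} \model(\PI_n^k)$. Under the identifications $\pi_*^S\Mod_n^k \cong \model((\PIst)_n^k)$ and $\PiAlg_n^k \cong \model(\PI_n^k)$ recalled above, this is precisely the restriction of $\Om^{\infty}$: indeed, $\Om^{\infty}$ preserves the support in degrees so it does restrict to a functor between the $n^k$ subcategories, and its identification with the theory-level restriction $(\Si^{\infty}|_{\PI_n^k})^*$ is forced by the naturality square coming from the subtheory inclusions $\PI_n^k \inj \PI$ and $(\PIst)_n^k \inj \PIst$.

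To show that $\Si^{\infty}\colon \PI_n^k \to (\PIst)_n^k$ is an equivalence, essential surjectivity is immediate: both categories consist of finite wedges of spheres in the same range of dimensions, and $\Si^{\infty}$ is bijective on this list of objects. For fullness and faithfulness, the coproduct identity $[W_1,W_2] = \prod_i [S^{a_i},W_2]$ (valid in either category when $W_1 = \We S^{a_i}$) reduces the problem to checking that, for each $a \in \{n,\ldots,n+k\}$ and each $W \in \PI_n^k$, the stabilization map $\pi_a(W) \to \pi_a^S(W)$ is a bijection. Since $W$ is a wedge of spheres of dimensions $\geq n$, it is $(n-1)$-connected, and the iterated Freudenthal suspension theorem gives that stabilization is an isomorphism in degrees $a \leq 2(n-1) = 2n-2$. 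The hypothesis $k \leq n-2$ yields $a \leq n+k \leq 2n-2$, so the range is exactly covered.

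The only substantive step is this Freudenthal estimate, and the hypothesis $k \leq n-2$ is tailored so as to be tight with respect to it; everything else is formal bookkeeping with theories and their models. I expect the sole minor obstacle to be the preliminary identification of $\Om^{\infty}|_{\pi_*^S\Mod_n^k}$ with $(\Si^{\infty}|_{\PI_n^k})^*$, but this follows from general nonsense about restricting models along compatible inclusions of subtheories.
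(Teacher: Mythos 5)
Your proposal is correct and follows essentially the same route as the paper: the paper's proof simply asserts that $\Si^{\infty} \colon \PI_n^k \to (\PIst)_n^k$ is an equivalence in the stable range and then restricts models along it, while you supply the (correct) Freudenthal justification that stabilization $\pi_a(W) \to \pi_a^S(W)$ is bijective for $a \leq n+k \leq 2n-2$ when $W$ is an $(n-1)$-connected wedge of spheres. The extra bookkeeping about identifying $\Om^{\infty}|_{\pi_*^S\Mod_n^k}$ with $(\Si^{\infty}|_{\PI_n^k})^*$ is also consistent with how the paper sets things up.
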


\begin{proof}
In the stable range, the stabilization functor $\Si^{\infty} \colon \PI_n^k \to (\PIst)_n^k$ is an equi\-va\-lence of categories. Therefore, it induces an equivalence on models
\[
(\Si^{\infty})^* \colon \model((\PIst)_n^k) \ral{\cong} \model(\PI_n^k)
\]
which is the desired equivalence.
\end{proof}

\subsection*{Split linear extension of theories}

\begin{proposition} \label{SplitLinExtn}
Let $n \geq 2$ and $k \geq 1$. Consider the functor
\begin{align*}
D \colon (\PI_{n+k}^0)^{\opp} \x \PI_n^{k-1} &\to \Ab \\
(S,U) &\mapsto [S,U].
\end{align*}
Then the theory $\PI_n^k$ with its natural projection
\[
\PI_n^k \to \PI_{n+k}^0 \x \PI_n^{k-1}
\]
given by ``collapse'' functors \cite[\S 4]{Frankland11} is the split linear extension \cite[Definition 7.1]{Baues99} of $\PI_{n+k}^0 \x \PI_n^{k-1}$ by $D$. 
\end{proposition}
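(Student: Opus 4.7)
The plan is to exhibit the split linear extension structure explicitly by decomposing objects and morphisms of $\PI_n^k$ according to the ``high versus low'' splitting, and then verifying that composition obeys the semidirect-product law of a split linear extension.

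First I would observe that every object $S$ of $\PI_n^k$ admits a canonical decomposition $S = S_{\hi} \vee S_{\lo}$, where $S_{\hi}$ is the wedge of the $S^{n+k}$ summands (an object of $\PI_{n+k}^0$) and $S_{\lo}$ is the wedge of the lower-dimensional summands (an object of $\PI_n^{k-1}$). The collapse functors defining the projection $q \colon \PI_n^k \to \PI_{n+k}^0 \x \PI_n^{k-1}$ are then $S \mapsto S_{\hi}$ and $S \mapsto S_{\lo}$, each obtained by collapsing the complementary wedge summand. A natural section is $s(V,U) := V \vee U$ with $s(g,f) := g \vee f$ on morphisms; this is a map of theories since wedge is the coproduct, and by construction $q \circ s = \id$.

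The key vanishing is $[S_{\lo}, T_{\hi}] = 0$: since $T_{\hi}$ is a wedge of $(n+k-1)$-connected spheres and $S_{\lo}$ consists of spheres of dimension strictly less than $n+k$, every map from $S_{\lo}$ to $T_{\hi}$ is null-homotopic. Exploiting the wedge decomposition of source and target, any morphism $f \colon S \to T$ in $\PI_n^k$ decomposes uniquely as $f = s(q(f)) + d(f)$, where $s(q(f)) = f_{\hi\hi} \vee f_{\lo\lo}$ and the ``extra data'' $d(f) = f_{\hi\lo}$ lies in the abelian group $D(S_{\hi}, T_{\lo}) = [S_{\hi}, T_{\lo}]$ (abelian because $n+k \geq 3$). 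This yields the natural bijection
\[
\Hom_{\PI_n^k}(S, T) \cong \Hom_{\PI_{n+k}^0 \x \PI_n^{k-1}}(q(S), q(T)) \x D(S_{\hi}, T_{\lo})
\]
required for a split linear extension, and by construction $d \circ s = 0$.

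The main step, which I expect to be the main bookkeeping obstacle, is verifying that composition matches the split linear extension formula. For composable $f \colon S \to T$ and $g \colon T \to U$, expanding each map along the wedge decomposition and again using $[S_{\lo}, T_{\hi}] = 0$ and $[T_{\lo}, U_{\hi}] = 0$ to kill cross-terms yields
\[
(g \circ f)_{\hi\lo} = g_{\lo\lo} \circ f_{\hi\lo} + g_{\hi\lo} \circ f_{\hi\hi},
\]
which is precisely the required identity $d(g \circ f) = q(g)_* d(f) + q(f)^* d(g)$ expressing the bifunctoriality of $D$ in its two variables. Combined with the routine verification that $q$ preserves coproducts (wedges decompose summand-wise into their high and low parts), this identifies $\PI_n^k$ with the split linear extension of $\PI_{n+k}^0 \x \PI_n^{k-1}$ by $D$ in the sense of \cite[Definition 7.1]{Baues99}.
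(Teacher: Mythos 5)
Your construction is the same as the paper's: split each object into its high part $S_{\hi}$ (spheres of dimension $n+k$) and low part $S_{\lo}$, record a morphism by the triple $(f_{\hi\hi},f_{\lo\lo},f_{\hi\lo})$, and verify the semidirect composition law. But two steps that you treat as formal actually require unstable input, and one of them is the heart of the proposition. First, the bijection $\Hom_{\PI_n^k}(S,T)\cong \Hom(qS,qT)\times D(S_{\hi},T_{\lo})$ does not follow from the vanishing $[S_{\lo},T_{\hi}]=0$ alone: a map \emph{into} a wedge is not in general determined by, nor freely assembled from, its two collapse components. What makes it so here is a connectivity estimate: $T_{\hi}\vee T_{\lo}\to T_{\hi}\times T_{\lo}$ is $(2n+k-1)$-connected while the source has dimension $n+k\leq 2n+k-2$ (this is where $n\geq 2$ enters), so any map $S^i\to T_{\hi}\vee T_{\lo}$ with $i\leq n+k$ equals $\inc^{\hi}\col^{\hi}g+\inc^{\lo}\col^{\lo}g$. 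The paper states and uses exactly this in its Step 3; you need it already to define your bijection.

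Second, and more seriously, your verification of the composition law tacitly assumes composition is bilinear. Expanding $g\circ f|_{S_{\hi}}$ means substituting $g\,\inc^{\hi}=\inc^{\hi}g_{\hi\hi}+\inc^{\lo}g_{\hi\lo}$ and then precomposing this \emph{sum} with $f_{\hi\hi}\colon S_{\hi}\to T_{\hi}$. Right distributivity $(a+b)\circ c=a\circ c+b\circ c$ fails in unstable homotopy theory in general — the error terms are Whitehead products weighted by the Hilton--Hopf invariants of $c$ — so the identity $(gf)_{\hi\lo}=g_{\lo\lo}f_{\hi\lo}+g_{\hi\lo}f_{\hi\hi}$ is not "precisely the bifunctoriality of $D$". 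It holds here only because $f_{\hi\hi}$ is a map between wedges of spheres of the \emph{same} dimension $n+k$, hence a suspension, so its Hilton--Hopf invariants vanish and precomposition by it is additive; the paper makes this point explicitly via Hilton's formula \cite[Theorem XI.8.5]{Whitehead78}. Without this observation your "routine bookkeeping" step is unjustified. With these two points supplied, your argument coincides with the paper's proof.
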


\begin{proof}
Note that $D$ takes values in $\Ab$ because every object $S = \vee_i S^{n+k}$ of $\PI_{n+k}^0$ is an abelian cogroup object (of $\PI$ or $\PI_n^k$). Moreover, $D$ is additive in $\PI_{n+k}^0$:
\[
D(S_1 \vee S_2, U) = [S_1 \vee S_2, U] = [S_1, U]_* \x [S_2, U] = D(S_1,U) \x D(S_2,U)
\]
and satisfies $D(S,\ast) = [S,\ast] = 0$ for any $S \in \PI_{n+k}^0$. Therefore, there is such a thing as the split linear extension $\TT$ of $\PI_{n+k}^0 \x \PI_n^{k-1}$ by $D$, with its projection $q \colon \TT \to \PI_{n+k}^0 \x \PI_n^{k-1}$. 

Let us construct an equivalence of categories $\phy \colon \PI_n^k \ral{\cong} \TT$ with inverse $\psi \colon \TT \ral{\cong} \PI_n^k$. Note that every object $X$ of $\PI_n^k$, i.e. a finite wedge of spheres of dimensions from $n$ to $n+k$, can be uniquely expressed as a wedge $X = S \vee U$ with $S \in \PI_{n+k}^0, U \in \PI_n^{k-1}$, i.e. $S$ contains the spheres of dimension $n+k$ and $U$ contains the remaining spheres, of dimensions from $n$ to $n+k-1$. Moreover, extracting either summand from $X$ is functorial in $X$, using the collapse functors
\begin{align*}
\col^{\hi} \colon \PI_n^k \to \PI_{n+k}^0 \\
\col^{\lo} \colon \PI_n^k \to \PI_n^{k-1}
\end{align*}
which extract the spheres of highest dimension $n+k$ and lower dimensions $n$ to $n+k-1$, respectively. By abuse of notation, write $\col^{\hi} \colon X \surj S$ and $\col^{\lo} \colon X \surj U$ for the corresponding collapse maps.

\textbf{Step 1: Construction of $\phy \colon \PI_n^k \to \TT$.} On objects, take
\[
\phy(X \cong S \vee U) := (S,U) = (\col^{\hi} X, \col^{\lo} X)
\]
and for a morphism $X_1 \cong S_1 \vee U_1 \ral{f} S_2 \vee U_2 \cong X_2$, $\phy(f)$ is defined by the data
\[
\begin{cases}
&S_1 \stackrel{\inc_1^{\hi}}{\inj} S_1 \vee U_1 \ral{f} S_2 \vee U_2 \stackrel{\col_2^{\hi}}{\surj} S_2 \\
&U_1 \stackrel{\inc_1^{\lo}}{\inj} S_1 \vee U_1 \ral{f} S_2 \vee U_2 \stackrel{\col_2^{\lo}}{\surj} U_2 \\
&S_1 \stackrel{\inc_1^{\hi}}{\inj} S_1 \vee U_1 \ral{f} S_2 \vee U_2 \stackrel{\col_2^{\lo}}{\surj} U_2 \\
\end{cases}
\]
where the last piece of data is an element of $[S_1,U_2]_* = D(S_1,U_2)$. In symbols:
\begin{align*}
\phy(f) &= \left( \col^{\hi}(f), \col^{\lo}(f), \col_2^{\lo} \circ f \circ \inc_1^{\hi} \right) \\
&=: \left( f^{\hi}, f^{\lo}, f^{\hi \lo} \right).
\end{align*}
We have $\phy (\id_X) = \id_{\phy X} = (\id_S, \id_U, 0)$. Remains to check that $\phy$ respects composition. Given a composite $X_1 \ral{f} X_2 \ral{g} X_3$ in $\PI_r^k$, which we write as
\[
S_1 \vee U_1 \ral{f} S_2 \vee U_2 \ral{g} S_3 \vee U_3
\]
applying $\phy$ yields
\begin{align*}
\phy(gf) &= \left( (gf)^{\hi}, (gf)^{\lo}, (gf)^{\hi \lo} \right) \\
&= \left( g^{\hi} f^{\hi}, g^{\lo} f^{\lo}, (gf)^{\hi \lo} \right)
\end{align*}
whereas the composite in $\TT$ is
\begin{align*}
\phy(g) \phy(f) &= \left( g^{\hi}, g^{\lo}, g^{\hi \lo} \right) \left( f^{\hi}, f^{\lo}, f^{\hi \lo} \right) \\
&= \left( g^{\hi} f^{\hi}, g^{\lo} f^{\lo}, (f^{\hi})^* g^{\hi \lo} + (g^{\lo})_* f^{\hi \lo} \right).
\end{align*}
A straightforward calculation proves the equality $(gf)^{\hi \lo} = (f^{\hi})^* g^{\hi \lo} + (g^{\lo})_* f^{\hi \lo}$.

\textbf{Step 2: Construction of $\psi \colon \TT \to \PI_n^k$.} On objects, take
\[
\psi(S,U) := S \vee U
\]
and for a morphism
\[
(f^h,f^l,\de) \colon (S_1,U_1) \to (S_2,U_2)
\]
in $\TT$, with $\de \in D(S_1,U_2) = [S_1,U_2]$, define the morphism
\begin{align*}
&\psi(f^h,f^l,\de) \colon S_1 \vee U_1 \to S_2 \vee U_2 \\
&\psi(f^h,f^l,\de) = \left( \inc_2^{\hi} f^h + \inc_2^{\lo} \de \right) ; \inc_2^{\lo} f^l.
\end{align*}
We have
\[
\psi 1_{(S,U)} = \psi (1_S,1_U,0) = \inc^{\hi} \vee \inc^{\lo} = 1_{S \vee U}
\]
and it remains to check that $\psi$ respects composition. Given a composite
\[
\xymatrix{
(S_1, U_1) \ar[r]^{(f^h, f^l, \de)} \ar@/_2pc/[rr]_-{(g^h f^h, g^l f^l, (f^h)^* \ep + (g^l)_
* \de)} & (S_2, U_2) \ar[r]^{(g^h, g^l, \ep)} & (S_3, U_3) \\
}
\]
in $\TT$, applying $\psi$ yields
\[
\xymatrix{
S_1 \vee U_1 \ar[rr]^-{\inc_2^{\hi} f^h + \inc_2^{\lo} \de; \inc_2^{\lo} f^l} \ar@/_2pc/[rrrr]_-{\inc_3^{\hi} g^h f^h + \inc_3^{\lo} ((f^h)^* \ep + (g^l)_* \de); \inc_3^{\lo} g^l f^l} & & S_2 \vee U_2 \ar[rr]^-{\inc_3^{\hi} g^h + \inc_3^{\lo} \ep; \inc_3^{\lo} g^l} & & S_3 \vee U_3 \\
}
\]
which is still commutative. This follows from right distributivity for maps between spheres \cite[Theorem X.8.1]{Whitehead78}, as well as  Hilton's formula \cite[Theorem XI.8.5]{Whitehead78} \cite[\S A.9]{Baues96} and the fact that $f^h \colon S_1 \to S_2$ is a map between spheres of equal dimensions (namely $n+k$). In that case, the Hilton--Hopf invariants vanish and composition is in fact left distributive, in other words precomposition by $f^h$ is linear.

\textbf{Step 3: $\psi \phy = \id_{\PI_n^k}$.} On objects, the composite of functors does
\[
(X \cong S \vee U) \stackrel{\phy}{\mapsto} (S,U) \stackrel{\psi}{\mapsto} S \vee U
\]
and on a map $X_1 \cong S_1 \vee U_1 \ral{f} S_2 \vee U_2 \cong X_2$, the composite does
\begin{align*}
f \stackrel{\phy}{\mapsto} &\left( f^{\hi}, f^{\lo}, f^{\hi \lo} \right) \\
\stackrel{\psi}{\mapsto} &\left( \inc_2^{\hi} f^{\hi} + \inc_2^{\lo} f^{\hi \lo} \right) ; \inc_2^{\lo} f^{\lo}.
\end{align*}
Here comes the topological argument. Note that $S$ is $(n+k-1)$-connected and $U$ is $(n-1)$-connected, so that the natural map $S \vee U \to S \x U$ is $(n+k+n-1)$-connected. This implies that for $i \leq n+k+n-2$ (in particular for $i \leq n+k$), any map $g \colon S^i \to S \vee U$ is homotopic to $\inc^{\hi} \col^{\hi} g + \inc^{\lo} \col^{\lo} g$.

On the first summand $S_1$, the map $f$ is
\begin{align*}
f \inc_1^{\hi} &= \inc_2^{\hi} \col_2^{\hi} f \inc_1^{\hi} + \inc_2^{\lo} \col_2^{\lo} f \inc_1^{\hi} \\
&= \inc_2^{\hi} f^{\hi} + \inc_2^{\lo} f^{\hi \lo}
\end{align*}
and on the second summand $U_1$, the map $f$ is
\begin{align*}
f \inc_1^{\lo} &= \inc_2^{\lo} \col_2^{\lo} f \inc_1^{\lo} \: \text{ (by cellular approximation)} \\
&= \inc_2^{\lo} f^{\lo}
\end{align*}
from which we obtain the desired equality $\psi \phy(f) = f$.

\textbf{Step 4: $\phy \psi = \id_{\TT}$.} On objects, the composite of functors does
\[
(S,U) \stackrel{\psi}{\mapsto} S \vee U \stackrel{\phy}{\mapsto} (S,U)
\]
and on a map $(f^h,f^l,\de) \colon (S_1,U_1) \to (S_2,U_2)$, the composite does
\begin{align*}
(f^h,f^l,\de) \stackrel{\psi}{\mapsto} &\left( \inc_2^{\hi} f^h + \inc_2^{\lo} \de \right) ; \inc_2^{\lo} f^l \\
\stackrel{\phy}{\mapsto} &\left( \col_2^{\hi} \left( \inc_2^{\hi} f^h + \inc_2^{\lo} \de \right), \col_2^{\lo} \inc_2^{\lo} f^l, \col_2^{\lo} \left( \inc_2^{\hi} f^h + \inc_2^{\lo} \de \right) \right) \\
= &\left( \col_2^{\hi} \inc_2^{\hi} f^h + \col_2^{\hi} \inc_2^{\lo} \de , \col_2^{\lo} \inc_2^{\lo} f^l, \col_2^{\lo} \inc_2^{\hi} f^h + \col_2^{\lo} \inc_2^{\lo} \de \right) \\
= &\left( f^h, f^l, \de \right). \qedhere
\end{align*}
\end{proof}

\begin{remark}
Proposition \ref{SplitLinExtn} was implicitly used in \cite[Proposition 1.6]{Baues00} without being proved there.
\end{remark}

\subsection*{Homotopy operation functors}

\begin{proof}[Proof of Proposition \ref{GammaAdditive}]
Let $A_n$ be an abelian group. We want to compute the abelian group $\tild{\Ga}_n^k(A_n) = \Ga_n^k(A_n, 0, \ldots, 0)$.

Our functor $\Ga_n^k$ is the functor denoted $\rho^* \De$ in \cite[(7.3)]{Baues99}. By Proposition \ref{SplitLinExtn} and \cite[Lemma 7.5; Lemma 7.10]{Baues99}, $\Ga_n^k$ can be computed using a free presentation, as we will explain shortly. Here we will implicitly use the identification $\model(\PI_{n+k}^0) \cong \Ab$ sending a model $M$ to the abelian group $M(S^{n+k})$.

Let $g \colon T \to S$ be a map between wedges of spheres of dimensions $n$, $n+1$, $\ldots$, $n+k-1$ satisfying
\begin{enumerate}
 \item $\coker \pi_n(g) = A_n$;
 \item $\coker \pi_i(g) = 0$ for $n < i < n+k$, that is, $\pi_i(g)$ is surjective in those degrees.
\end{enumerate}
Then the sequence of abelian groups
\begin{equation} \label{CokerColl}
\pi_{n+k}(T \vee S)_2 \ral{\pi_{n+k}(g,1)} \pi_{n+k}(S) \surj \tild{\Ga}_n^k(A_n) \to 0
\end{equation}
is exact, where the left-hand group is
\[
\pi_{n+k}(T \vee S)_2 \dfn \ker \left( \pi_{n+k}(T \vee S) \ral{\pi_{n+k}(0,1)} \pi_{n+k}(S) \right) 
\]
i.e. the kernel of the collapse map. In other words, our functor can be computed as $\tild{\Ga}_n^k(A_n) = \coker \pi_{n+k}(g,1)$.

A free presentation can be obtained as follows. Let $R \ral{f} F \surj A_n \to 0$ be a free presentation of $A_n$ as abelian group, i.e., an exact sequence where $R$ and $F$ are free abelian groups. Realize $R \to F$ as $\pi_n(g')$ for a map $g' \colon S' \to S$ between wedges of spheres of dimension $n$ (with a sphere $S^n$ for each summand $\Z$). Now insert spheres of higher dimensions to kill all the homotopy of $S$. More precisely, consider the wedge
\[
S'' \dfn \We_{\substack{x \in \pi_i S \\ n < i < n+k}} S^i
\]
and the map $g'' \colon S'' \to S$ defined on each summand $S^i$ by (a representative of) the indexing element $x \in \pi_i S$. The map
\[
T = S'' \vee S' \ral{g = (g'',g')} S
\]
provides a free presentation as described above.

\textbf{Step 1: Assume $A_n \simeq \Z$ is free on one generator.}

The free presentation of $A_n$ is given by $R=0$ and $F=\Z$, so that we take $S' = *$ and $S = S^n$. We want to compute the cokernel illustrated in \eqref{CokerColl}. We claim that the image of $\pi_{n+k}(g,1)$ is the subgroup $Dec \subset \pi_{n+k}(S^n)$ generated by decomposable elements, which would prove the result $\tild{\Ga}_n^k(\Z) = Q_{k,n}$.

Take $x \in \pi_{n+k}(T \vee S^n)_2$ and consider its image $\pi_{n+k}(g,1)(x) \in \pi_{n+k}(S^n)$ as illustrated in the diagram
\[
\xymatrix{
S^{n+k} \ar[dr] \ar[r]^{x} & T \vee S^n \ar[d]^{(g,1)} \\
& S^n. \\
}
\]
Since $T$ is a wedge of spheres (of dimensions strictly between $n$ and $n+k$), the Hilton--Milnor theorem \cite[Theorem XI.8.1]{Whitehead78} implies
\[
\pi_{n+k}(T \vee S^n) \simeq \Op_j \pi_{n+k} (S^{m_j})
\]
for some appropriate dimensions $m_j$, and $x$ can be expressed as
\[
x = \sum_j p_j \circ x_j
\]
where the $p_j$ are certain iterated Whitehead products of summand inclusions of the individual spheres of $T \vee S^n$. In particular, the element
\[
(g,1) \circ x = (g,1) \circ \left( \sum_j p_j \circ x_j \right) = \sum_j (g,1) \circ p_j \circ x_j
\]
is a sum of decomposables, except possibly one term, corresponding to the summand inclusion $S^n \inj T \vee S^n$. However, that one term is precisely $x_j = (0,1) \circ x = \pi_{n+k}(0,1)(x) = 0$ by assumption. Hence $\pi_{n+k}(g,1)(x)$ is decomposable.

Conversely, take any decomposable element $x \in \pi_{n+k}(S^n)$. By the assumption $k \neq n-1$, $x$ must be a sum of compositions $x = \sum_i x_i \circ \al_i$ for some $\al_i \in \pi_{n+k}(S^{m_i})$, $x_i \in \pi_{m_i}(S^n)$, $n < m_i < n+k$. But each such composite is in the image of $\pi_{n+k}(g,1)$. By construction of $T$, there is a wedge summand $S^{m_i} \inj T$ corresponding to $x_i \in \pi_{m_i}(S^n)$. The diagram
\[
\xymatrix{
S^{n+k} \ar[drr] \ar[r]^{\al_i} & S^{m_i} \ar[dr]^{x_i} \ar@{^{(}->}[r]^{\io} & T \vee S^n \ar[d]^{(g,1)} \\
& & S^n. \\
}
\]
illustrates the equality $x_i \circ \al_i = (g,1) \circ \io \circ \al_i = \pi_{n+k}(g,1) (\io \circ \al_i)$. Moreover, the map $(0,1) \circ \io \colon S^{m_i} \to S^n$ is null, which guarantees $\io \circ \al_i \in \ker \pi_{n+k}(0,1) = \pi_{n+k}(T \vee S^n)_2$.

\textbf{Step 2: Assume $A_n$ is free.}

Take $S = \vee_l S^n$ satisfying $A_n = F \simeq \op_l \Z = \pi_n (S)$ and take $S' = *$. Consider the composition function
\begin{align*}
\pi_n (S) \x \pi_{n+k}(S^n) &\to \pi_{n+k}(S) \\
(x,\al) &\mapsto x \circ \al.
\end{align*}
It is linear in the second variable $\al$ but not in the first variable $x$. Failure to be linear in $x$ is measured by the ``distributive law of homotopy theory'' or Hilton's formula \cite[Theorem XI.8.5]{Whitehead78}. The error terms are composites which are all in the image of $\pi_{n+k}(g,1) \colon \pi_{n+k}(T \vee S)_2 \to \pi_{n+k}(S)$ as explained in step 1. By modding out this image, we obtain a well-defined bilinear map 
\[
\pi_n(S) \ot \pi_{n+k}(S^n) \to \tild{\Ga}_n^k (A_n).
\]
This map vanishes on elements $x \ot \al$ where $\al$ is decomposable, since such an $\al$ is in the image of $\pi_{n+k}(g,1)$. Thus there is an induced canonical map
\[
\phy \colon \pi_n(S) \ot Q_{k,n} \to \tild{\Ga}_n^k (A_n).
\]
We claim that $\phy$ is an isomorphism. The Hilton--Milnor theorem provides an isomorphism
\begin{align*}
\pi_{n+k}(S) &= \pi_{n+k}(\vee_l S^n) \\
&\simeq \Op_j \pi_{n+k}(S^{m_j}) \\
&\simeq \Op_l \pi_{n+k}(S^n) \op \Op_{j \text{ such that } m_j > n} \pi_{n+k}(S^{m_j})
\end{align*}
so that we can project onto the first summand $\op_l \pi_{n+k}(S^n) \cong F \ot \pi_{n+k}(S^n)$ and then mod out the decomposables:
\[
\pi_{n+k}(S) \surj F \ot \pi_{n+k}(S^n) \surj F \ot Q_{k,n} = \pi_n(S) \ot Q_{k,n}.
\]
This map vanishes on the image of $\pi_{n+k}(g,1)$ and therefore induces a map on the cokernel
\[
\psi \colon \tild{\Ga}_n^k (A_n) \to \pi_n(S) \ot Q_{k,n}.
\]
One readily checks that $\psi$ is inverse to $\phy$.

\textbf{Step 3: $A_n$ is an arbitrary abelian group.}

The free presentation of $A_n$ can be canonically turned into the reflexive coequalizer diagram:
\[
\xymatrix{
R \op F \ar@<+0.8ex>[r]^-{(f,1)} \ar@<-0.8ex>[r]_-{(0,1)} & F \ar[l] \ar@{->>}[r] & A_n \\
}
\]
where the summand inclusion $F \inj R \op F$ is a common section of the pair of maps. Since the functor $- \ot Q_{k,n} \colon \Ab \to \Ab$ preserves reflexive coequalizers (in fact it is additive and right exact), it suffices to show that $\tild{\Ga}_n^k$ preserves reflexive coequalizers to obtain the natural isomorphism
\[
\tild{\Ga}_n^k (A_n) = A_n \ot Q_{k,n}
\]
using Step 2.

To prove that $\tild{\Ga}_n^k$ preserves reflexive coequalizers, recall that this functor is the composite
\[
\xymatrix{
\Ab \cong \PiAlg_n^0 \ar@/_2pc/[rrr]_-{\tilde{\Ga}_n^k} \ar@{^{(}->}[r]^-{\io} & \PiAlg_n^{k-1}  \ar@/^2pc/[rr]^-{\Ga_n^k} \ar[r]^-L & \PiAlg_n^{k} \ar[r]^-{\pi_{n+k}} & \Ab \\
}
\]
where $L$ is left adjoint to Postnikov truncation, and in particular $L$ preserves co\-li\-mits. The inclusion $\io \colon \PiAlg_n^0 \to  \PiAlg_n^{k-1}$ admits a right adjoint, and thus preserves colimits. By \cite[Chapter 3]{Adamek11}, reflexive coequalizers in $\PiAlg_n^{k}$ are computed at the level of underlying graded sets, and are in particular preserved by the restriction functor $\pi_{n+k} \colon \PiAlg_n^{k} \to \Ab$.
\end{proof}

\begin{proof}[Proof of Proposition \ref{GammaQuadratic}]
Similar to the proof of \ref{GammaAdditive} above. The key ingredient here is the computation of \cite[Corollary 9.4]{Baues94}:
\[
\pi_{2n-1}(S) \cong \pi_n(S) \ot^q \pi_{2n-1} \{ S^n \}
\]
where $S = \vee_l S^n$ is a wedge of $n$--spheres, so that $\pi_n(S) \cong \op_l \Z$ is a free abelian group. Decomposables (compositions) must be modded out for the same reason as in the proof of \ref{GammaAdditive}.

The functor $- \ot^q Q_{n-1} \{ S^n \} \colon \Ab \to \Ab$ is not additive and does not preserve cokernels in general, but it does preserve reflexive coequalizers.
\end{proof}

\bibliographystyle{plain}
\bibliography{RealizabilityOps_Bib}   

\end{document}